\tikzset{every node/.style={draw,circle,inner sep=2pt}}
\theoremstyle{plain}
\newtheorem{thm}{Theorem}[section]
\newtheorem{cor}[thm]{Corollary}
\newtheorem{lem}[thm]{Lemma}
\theoremstyle{definition}
\newtheorem{defn}[thm]{Definition}
\newtheorem{exam}[thm]{Example}
\newtheorem{rem}[thm]{Remark}
\newtheorem{obs}[thm]{Observation}
\theoremstyle{remark}
\numberwithin{equation}{section}
\newcommand{\calD}{\mathcal{D}}
\newcommand{\cof}{\operatorname{cof}}
\newcommand{\cofD}{{\textstyle\cof_{\calD}}}
\newcommand{\detD}{{\textstyle\det_{\calD}}}
\newcommand{\inertia}{\operatorname{inertia}}
\newcommand{\inertiaD}{\inertia_{\calD}}
\newcommand{\CP}[1]{\mathcal{CP}_{#1}}
\newcommand{\bb}{{\bf b}}
\newcommand{\be}{{\bf e}}
\newcommand{\bd}{{\bf d}}
\newcommand{\by}{{\bf y}}
\newcommand{\bphi}{\bm{\phi}}
\newcommand{\bzero}{{\bf 0}}
\newcommand{\bone}{{\bf 1}}
\newcommand{\trans}{^\top}
\newcommand{\floating}[1]{\smash{\raisebox{.5\normalbaselineskip}{#1}}}
\newcommand{\sign}{\operatorname{sign}}
\newcommand{\dist}{\operatorname{dist}}
\newcommand{\dunion}{\mathbin{\dot{\cup}}}
\begin{document}
\title{On the distance matrices of the CP graphs}
\author{Yen-Jen Cheng\footnotemark[2] \and Jephian C.-H. Lin\footnotemark[3]}
\date{\today}

\maketitle

\renewcommand{\thefootnote}{\fnsymbol{footnote}}
\footnotetext[2]{
    Department of Applied Mathematics, National Chiao Tung University, 1001 University Road, Hsinchu, Taiwan 30010
		}

\footnotetext[3]{
    Department of Applied Mathematics, National Sun Yat-sen University, Kaohsiung 80424, Taiwan
		}

\renewcommand{\thefootnote}{\arabic{footnote}}

\begin{abstract}
This paper introduces a new class of graphs, the CP graphs, and shows that their distance determinant and distance inertia are independent of their structures.  The CP graphs include the family of linear $2$-trees.  When a graph is attached with a CP graph, it is shown that the distance determinant and the distance inertia are also independent of the structure of the CP graph.  Applications to the addressing problem proposed by Graham and Pollak in 1971 are given.

\medskip
\noindent
{\bf MSC:}
05C50, 
05C12,  
15A15  	

\medskip
\noindent
{\bf Keywords:}
CP graph, linear $2$-tree, distance matrix, determinant, inertia
\end{abstract}

\section{Introduction}
\label{sec:intro}

On a simple connected graph $G$, the distance between two vertices $i$ and $j$ is the length of the shortest path between them, denoted as $\dist_G(i,j)$.  The \emph{distance matrix} of connected graph $G$ is
\[\calD(G)=\begin{bmatrix}\dist_G(i,j)\end{bmatrix}.\]
Various properties of the distance matrix of a graph has been studied intensively; see \cite{AH14} for a survey and the references therein.

Suppose $A$ is an $n\times n$ symmetric matrix.  The \emph{inertia} $\inertia(A)$ of $A$ is the triple $(n_+,n_-,n_0)$, where $n_+$, $n_-$, and $n_0$ are the number of positive, negative, and zero eigenvalues of $A$, respectively.  The \emph{$i,j$-cofactor} of $A$ is $(-1)^{i+j}\det( A(i|j))$, where $A(i|j)$ is the matrix obtained from $A$ by removing the $i$-th row and the $j$-th column.  Let $\cof(A)$ be the sum of all cofactors.  That is,
\[\cof(A)=\sum_{i=1}^n\sum_{j=1}^n (-1)^{i+j} \det (A(i|j)).\]
When $A$ is invertible, $\cof (A)=\det(A)(\bone\trans A^{-1}\bone)$, where $\bone$ is the all-one vector.  For convenience, we write $\detD(G)=\det(\calD(G))$, $\inertiaD(G)=\inertia(\calD(G))$, and $\cofD(G)=\cof (\calD(G))$.

In 1971, Graham and Pollak \cite{GP71} proved that $\detD(T)=(-1)^{n-1}(n-1)2^{n-2}$ for any tree $T$ on $n$ vertices, so the distance determinant is independent of the structure of the tree.  (In 2006, a simple proof of this result was given in \cite{YY06}.)  Graham, Hoffman, and Hosoya \cite{GHH77} then gave a generalization by showing both $\detD(G)$ and $\cofD(G)$ are determined by $\detD(G_i)$ and $\cofD(G_i)$ for $i=1,\ldots,k$, where $G_i$'s are the blocks of $G$.  (The blocks of a graph is the maximal induced subgraphs of $G$ without a cut-vertex.)  This means $\detD(G)$ and $\cofD(G)$ are independent of how the blocks are attached to each other.  Several variants of the distance matrices were considered, such as the weighted distance matrix \cite{BKN05}, the $q$-analog and the $q$-exponential distance matrix \cite{BLP06,YY07}, and the determinant of these matrices of a tree are shown to be independent of the structure of the tree.  These results gave elegant formulas for various types the distance matrices of a tree, or graphs with cut-vertices.  A natural question is:  Can the distance determinant be a constant for other family of graphs with tree-like structure, or graphs without a cut-vertex?

\begin{figure}[h]
\begin{center}
\begin{tikzpicture}
\node (0) at (0,0) {};
\node (1) at (1,0) {};
\node (2) at (2,0) {};
\node (3) at (0.5,0.866) {};
\node (4) at (1.5,0.866) {};
\node (5) at (1,1.732) {};
\draw (0) -- (1) -- (2) -- (4) -- (5) -- (3) -- (0);
\draw (1) -- (4) -- (3) -- (1);
\node[draw=none,rectangle] at (1,-0.5) {$G_1$};
\end{tikzpicture}
\hfil
\begin{tikzpicture}
\node (0) at (0,0) {};
\node (1) at (1,0) {};
\node (2) at (2,0) {};
\node (3) at (0.5,-0.866) {};
\node (4) at (1.5,-0.866) {};
\node (5) at (2.5,-0.866) {};
\draw (0) -- (1) -- (2) -- (5) -- (4) -- (3) -- (0);
\draw (3) -- (1) -- (4) -- (2);
\node[draw=none,rectangle] at (1.25,-1.366) {$G_2$};
\end{tikzpicture}
\hfil
\begin{tikzpicture}
\node (0) at (0,0) {};
\node (1) at (1,0) {};
\node (2) at (2,0) {};
\node (3) at (0.5,-0.866) {};
\node (4) at (1.5,-0.866) {};
\node (5) at (1.5,0.866) {};
\draw (0) -- (1) -- (2) -- (4) -- (3) -- (0);
\draw (1) -- (4);
\draw (3) -- (1) -- (5) -- (2);
\node[draw=none,rectangle] at (1,-1.366) {$G_3$};
\end{tikzpicture}

\end{center}
\caption{Three $2$-trees}
\label{threektrees}
\end{figure}
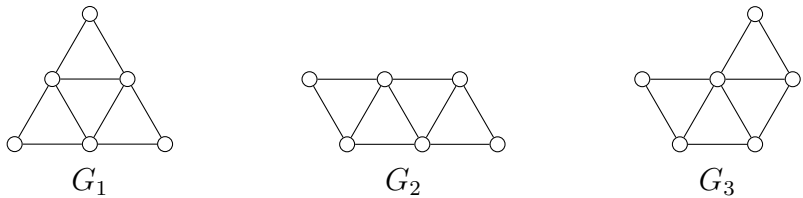

The family of $k$-trees is an immediate candidate to answer the question.  Figure~\ref{threektrees} shows three $2$-trees $G_1$, $G_2$, and $G_3$ of the same order.  By direct computation, $\detD(G_1)=-8$ and $\detD(G_2)=\detD(G_3)=-9$.  It seems giving a negative answer to the questions.  However, it also suggests that the family of linear $2$-tree is probably promising.

Indeed, Corollary~\ref{cor:lineartwotree} shows every linear $2$-tree on $n$ vertices have the same distance determinant.  Section~\ref{sec:twoCP} defines the $2$-clique paths, which is obtained by gluing the edges from several cliques into a path-like structure; the family of linear $2$-tree is a special case of the family of $2$-clique paths.  Theorem~\ref{thm:2cpdetinertia} shows the distance determinant only depends on the size of each clique.  In fact, the $2$-clique paths belong to a bigger family of graphs, the CP graphs; see Figure~\ref{G1G2} for some examples of the CP graphs.  Section~\ref{sec:CP} defines the CP graphs and shows that the distance determinant of the CP graphs only depends on the input parameters; the distance inertia and $\cofD(G)$ are also considered.  Section~\ref{sec:attaching} shows gluing the initial edges of two different CP graphs to any given connected graph will give two new graphs with the same distance determinant.

Finally, we turn our attention to the addressing problem.  Graham and Pollak \cite{GP71} proposed an \emph{addressing scheme} on a graph $G$ as an assignment of strings in $\{0,1,*\}$ of length $d$ to each vertex such that the distance between any pair of vertices is equal to the Hamming distance of their strings, ignoring the digits of $*$.  Let $N(G)$ be the minimum $d$ so that there is an addressing scheme on $G$.  In \cite{GP71}, it was shown that
\[N(G)\geq \max\{n_+,n_-\},\]
where $\inertiaD(G)=(n_+,n_-,n_0)$, and was conjectured that $N(G)\leq n-1$ for any graph of order $n$.
This conjecture was then proved by Winkler \cite{Winkler83} in 1983.  If $G$ is a tree or a block graph on $n\geq 2$ vertices, then it is known \cite{GP71,LLL15} that $n_-=N(G)=n-1$.  Section~\ref{sec:address} shows that $\inertiaD(G)=(1,n-1,0)$ and $N(G)=n-1$ for any graph $G$ whose blocks are $2$-clique paths, generalizing the known results.

For convenience, we use weighted graphs to record a matrix.  A \emph{weighted graph} is a simple graph whose vertices and edges are associated with weights.  The weight $w(i,j)$ of an edge $\{i,j\}$ is nonzero, and the weight $w(i)$ of a vertex can possibly be zero.  The \emph{weighted adjacency matrix} of a weighted graph is $\begin{bmatrix} a_{i,j}\end{bmatrix}$ with
\[\begin{cases}
a_{i,j}=w(i,j) & \text{if }i\neq j,\\
a_{i,j}=w(i) & \text{if }i=j.
\end{cases}\]
The notation $\{\be_i\}_{i=1}^n$ stands for the standard basis of $\mathbb{R}^n$.  Define $[n]=\{1,\ldots, n\}$ and $[a,b]=\{a,a+1,\ldots, b\}$.  When $a>b$, $[a,b]=\varnothing$.

\section{The CP graphs and their distance matrices}
\label{sec:CP}

A sequence of integers $q_1,\ldots,q_n$ ($n\geq 2$) is called \emph{non-leaping} if $q_1=0$, $q_2=1$, and $2\leq q_k\leq q_{k-1}+1$ for any $k=3,\ldots, n$.  (So $q_3=2$ if $n\geq 3$.)
For a given non-leaping sequence, a \emph{neighborhood sequence} is a sequence of sets $W_1,\ldots, W_n$ with the following properties:
\begin{enumerate}
\item $W_1=\varnothing$;
\item $W_2=\{1\}$;
\item for each $k\geq 3$, $|W_k|=q_k$ and $W_k=\{a_k\}\cup [b_k,k-1]$, where $b_k=k-q_k+1$ and $a_k\in W_{k-1}$ with $a_k<b_k$.
\end{enumerate}
We vacuously define $a_2=1$ and $b_2=2$.  Note that each $b_k$ is determined by the given non-leaping sequence, yet the neighborhood sequences may vary by the choices of $a_k$.  Therefore, each set $W_k$ contains $q_k$ elements, while $q_k-1$ of them are fixed with respect to the non-leaping sequence.

A neighborhood sequence gives the construction of a graph:  Start with vertex $1$.  For $k=2,\ldots, n$, add vertex $k$ and join it to every vertex in $W_k$.  In other words, a neighborhood sequence describes the ``backward'' neighborhoods of its graph.  Note that a non-leaping sequence may lead to more than one neighborhood sequence, depending on the choices of $a_k$, but the neighborhood sequence uniquely determines the graph.

\begin{obs}
\label{firstobs}
By the definition of a non-leaping sequence, $q_k\leq q_{k-1}+1$ and $b_k\geq b_{k-1}$.  Therefore,
$W_k\setminus\{k-1\}=\{a_k\}\cup [b_k,k-2] \subseteq W_{k-1}$
by the definition of a neighborhood sequence.  As a result, if $G$ is the graph constructed by a neighborhood sequence, then $W_k\cup\{k\}$ forms a clique in $G$ for each $k$.
\end{obs}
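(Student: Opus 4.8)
The plan is to verify the two displayed containments directly from the definitions and then establish the clique property by induction on $k$; the whole statement is really a careful reading of the construction, so I expect no deep obstacle, only bookkeeping.

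First I would prove the inequality $b_k\ge b_{k-1}$ (the companion inequality $q_k\le q_{k-1}+1$ is literally part of the non-leaping condition, so there is nothing to prove there). Since $b_k=k-q_k+1$ for every $k\ge 2$ — this also holds for $k=2$ under the vacuous convention $b_2=2$, $q_2=1$ — a one-line computation gives $b_k-b_{k-1}=(k-q_k+1)-(k-q_{k-1})=q_{k-1}+1-q_k$, which is nonnegative precisely because $q_k\le q_{k-1}+1$.

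Next I would establish $W_k\setminus\{k-1\}=\{a_k\}\cup[b_k,k-2]\subseteq W_{k-1}$. Starting from $W_k=\{a_k\}\cup[b_k,k-1]$, removing $k-1$ leaves $\{a_k\}$ untouched because $a_k<b_k\le k-1$, and shrinks the interval to $[b_k,k-2]$; here I would lean on the convention $[a,b]=\varnothing$ when $a>b$ so that small cases (e.g.\ $k=3$, where $[b_3,1]=\varnothing$) are handled uniformly. For the containment itself, the definition of a neighborhood sequence already supplies $a_k\in W_{k-1}$, and since $W_{k-1}=\{a_{k-1}\}\cup[b_{k-1},k-2]$ with $b_k\ge b_{k-1}$, I obtain $[b_k,k-2]\subseteq[b_{k-1},k-2]\subseteq W_{k-1}$, which finishes this step.

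Finally, for the clique claim I would induct on $k$. The base cases $k=1,2$ are immediate, giving $\{1\}$ and $\{1,2\}$ respectively. For the inductive step, assume $W_{k-1}\cup\{k-1\}$ is a clique in $G$. By construction vertex $k$ is joined to every vertex of $W_k$, so it suffices to show that $W_k$ is itself a clique. Writing $W_k=(W_k\setminus\{k-1\})\cup\{k-1\}$, the previous step gives $W_k\setminus\{k-1\}\subseteq W_{k-1}$, so those vertices are pairwise adjacent as a subset of the clique $W_{k-1}\cup\{k-1\}$; moreover vertex $k-1$ is adjacent to every vertex of $W_{k-1}$, hence to every vertex of $W_k\setminus\{k-1\}$. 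Thus $W_k$, and therefore $W_k\cup\{k\}$, is a clique. The only place demanding genuine attention is the interplay between the empty-interval convention and the vacuous values $a_2=1$, $b_2=2$, which must be checked so that both the base case of the induction and the containment argument remain uniform.
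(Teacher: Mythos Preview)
Your proof is correct and follows exactly the route the paper intends: the paper states this as an observation without a separate proof, leaving the verification of $b_k\ge b_{k-1}$, the containment $W_k\setminus\{k-1\}\subseteq W_{k-1}$, and the clique conclusion to the reader, and you have supplied precisely those details. Your induction on $k$ for the clique property is the natural way to make the ``As a result'' sentence rigorous, and your handling of the boundary conventions ($a_2=1$, $b_2=2$, empty intervals) is sound.
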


\begin{defn}
Given a non-leaping sequence $q_1,\ldots, q_n$, the family $\CP{q_1,\ldots,q_n}$ consists of the graphs of any possible neighborhood sequences of $q_1,\ldots,q_n$.  Graphs in $\CP{q_1,\ldots,q_n}$ for any non-leaping sequence are called CP graphs.
\end{defn}

Note that when $s=0,1,2,\ldots,2$ with $m$ copies of $2$, the family $\CP{s}$ is exactly the family of linear $2$-trees on $m+2$ vertices; see, e.g., \cite{param} for the definition of linear $2$-trees.

\begin{defn}
\label{def:reduced}
The \emph{reduced graph} of a non-leaping sequence $q_1,\ldots,q_n$ is a weighted graph on $n$ vertices whose edges are $\{1,2\}$ with weight $1$ and
\[\begin{cases}
\{b_{k-1},k\} & \text{with weight }1, \\
\{b_k,k\} & \text{with weight }-1, \\
\{k-1,k\} & \text{with weight }1
\end{cases}\]
for $k=3,\ldots, n$.
For each $k$, if any edges of the three edges above are the  same, then they merge as an edge and the weight is the sum of the weight of each edge; when the sum of the weights is zero, the edge degenerates as a nonedge.
Finally, the weight for vertices $1$ and $2$ are $0$ while the weight of all other vertices is $-2$.
\end{defn}

Here we elaborate all cases of Definition~\ref{def:reduced}.  First observe that $b_{k-1}\leq b_k\leq k-1$ for $k\geq 3$ by definition.  Then note that $b_{k-1}=b_k$ if and only if $q_k=q_{k-1}+1$; also, $b_k=k-1$ if and only if $q_k=2$.
Therefore, the case $b_{k-1}=b_k=k-1$ happens only when $k=3$; in this case, the three edges merge together as a single edge $\{2,3\}$ with weight $1-1+1=1$.  Now consider the cases when $k\geq 4$.  When $q_k=2$, the edges $\{b_k,k\}$ and $\{k-1,k\}$ cancel with each other since the weight is $-1+1=0$; similarly, when $q_k=q_{k-1}+1$, the edges $\{b_{k-1},k\}$ and $\{b_k,k\}$ cancel with each other.

\begin{rem}
\label{rem:reduced}
For any $k=3,\ldots, n$, if $q_k=2$, then the only neighbor of $k$ in $[1,k-1]$ is $b_{k-1}$; if $q_k=q_{k-1}+1$, then the only neighbor of $k$ in $[1,k-1]$ is $k-1$; otherwise,  each $k$ has three neighbors in $[1,k-1]$.
\end{rem}

\begin{figure}[h]
\begin{center}
\begin{tikzpicture}
\node[label={above:$1$}] (1) at (-1,0) {};
\node[label={above:$3$}] (3) at (0,0) {};
\node[label={above:$5$}] (5) at (1,0) {};
\node[label={above:$7$}] (7) at (2,0) {};
\node[label={below:$2$}] (2) at (240:1) {};
\node[label={below:$4$},xshift=1cm] (4) at (240:1) {};
\node[label={below:$6$},xshift=2cm] (6) at (240:1) {};
\node[label={below:$8$},xshift=3cm] (8) at (240:1) {};
\draw (1) -- (3) -- (5) -- (7) -- (8) -- (6) -- (4) -- (2) -- (1);
\draw (2) -- (3) -- (4) -- (5) -- (6) -- (7);
\draw (4) -- (7);
\draw (5) -- (8);
\node[rectangle,draw=none] at (0.75,-2) {$G_1$};
\end{tikzpicture}
\hfil
\begin{tikzpicture}
\node[label={below:$1$}] (1) at (0,0) {};
\foreach \v/\ang in {2/180,3/240,4/300,5/0,6/60,7/120,8/150}{
\node[label={\ang:$\v$}] (\v) at (\ang:1) {};
\draw (1) -- (\v);
}
\draw (1) -- (2) -- (3) -- (4) -- (5) -- (6) -- (7) -- (8) -- (1);
\draw (5) -- (7);
\draw (6) -- (8);
\node[rectangle,draw=none] at (0,-2) {$G_2$};
\end{tikzpicture}
\end{center}
\caption{Two graphs $G_1$ and $G_2$ in $\CP{0,1,2,2,2,2,3,3}$}
\label{G1G2}
\end{figure}
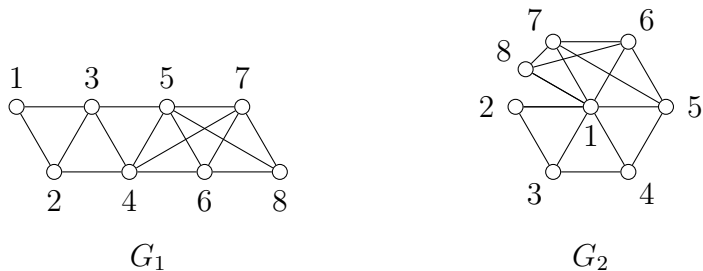

\begin{exam}
\label{exG1G2}
Let $s=0,1,2,2,2,2,3,3$ be a non-leaping sequence.  The two cases below are both neighborhood sequences of $s$.
\[\begin{array}{c|ccccccccc}
 & W_1 & W_2 & W_3 & W_4 & W_5 & W_6 & W_7 & W_8 \\
\hline
 G_1 & \varnothing & \{1\} & \{1,2\} & \{2,3\} & \{3,4\} & \{4,5\} & \{4,5,6\} & \{5,6,7\} \\
 G_2 & \varnothing & \{1\} & \{1,2\} & \{1,3\} & \{1,4\} & \{1,5\} & \{1,5,6\} & \{1,6,7\} \\
\end{array}
\]
The graphs $G_1$ and $G_2$ are shown in Figure~\ref{G1G2}, and both of them are in $\CP{s}$.

Notice that $a_k$ is always the minimum element in $W_k$ and $b_k$ is the next element after $a_k$ for any $k=3,\ldots,n$.  Therefore, the values of $b_k$ are invariants of $s$ as shown below.
\[\begin{array}{ccccccc}
b_2 & b_3 & b_4 & b_5 & b_6 & b_7 & b_8 \\
\hline
2 & 2 & 3 & 4 & 5 & 5 & 6
\end{array}
\]
Figure~\ref{redG} shows the reduced graph of $s$.  In general, each vertex $k$ with $k\geq 3$ is adjacent to three vertices in $[1,k-1]$; e.g., $k=8$.  However, these three edges might merge together.  If $q_k=2$, then $b_k=k-1$ and $\{b_k,k\}$ cancels $\{k-1,k\}$; e.g., $k=3,4,5,6$.  If $q_k=q_{k-1}+1$, then $b_{k-1}=b_k$ and $\{b_{k-1},k\}$ cancels with $\{b_k,k\}$; e.g., $k=7$.
\end{exam}

\begin{figure}[h]
\begin{center}
\begin{tikzpicture}[scale=1.5]
\node[label={above:$1$}] (1) at (-1,0) {$0$};
\node[label={above:$3$}] (3) at (0,0) {\tiny $-2$};
\node[label={above:$5$}] (5) at (1,0) {\tiny $-2$};
\node[label={above:$7$}] (7) at (2,0) {\tiny $-2$};
\node[label={below:$2$}] (2) at (240:1) {$0$};
\node[label={below:$4$},xshift=1cm] (4) at (240:1) {\tiny $-2$};
\node[label={below:$6$},xshift=2cm] (6) at (240:1) {\tiny $-2$};
\node[label={below:$8$},xshift=3cm] (8) at (240:1) {\tiny $-2$};
\draw (1) -- (2) -- (3) -- (5) -- (8) -- (7) -- (6) -- (4) -- (2);
\draw[dash dot] (6) -- (8);
\end{tikzpicture}
\end{center}
\caption{The reduced graph of the sequence $0,1,2,2,2,2,3,3$, where each solid edge has weight $1$ and each dashed edge has weight $-1$}
\label{redG}
\end{figure}
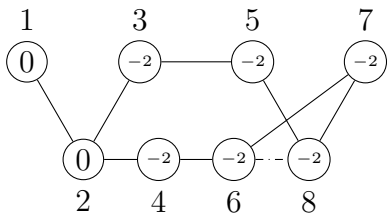

\begin{defn}
Let $s$ be a non-leaping sequence and $W_k$'s a corresponding neighborhood sequence.
Define the \emph{reducing matrix} $E$ as an $n\times n$ matrix whose $k$-th column is
\[\begin{cases}
\be_k & \text{if }k\in\{1,2\},\\
\be_k-\be_{a_k}-\be_{k-1}+\be_{a_{k-1}} & \text{if }k\geq 3. \\
\end{cases}\]
\end{defn}

\begin{exam}
\label{E1E2}
Let $G_1$ and $G_2$ be the graphs shown in Figure~\ref{G1G2}.  The reducing matrix depends on the choice of the neighborhood sequence.  For $i\in\{1,2\}$, the distance matrix $D_i$ and the corresponding reducing matrix $E_i$ of $G_i$ are shown below.
\[D_1=
\begin{bmatrix}
0 & 1 & 1 & 2 & 2 & 3 & 3 & 3 \\
1 & 0 & 1 & 1 & 2 & 2 & 2 & 3 \\
1 & 1 & 0 & 1 & 1 & 2 & 2 & 2 \\
2 & 1 & 1 & 0 & 1 & 1 & 1 & 2 \\
2 & 2 & 1 & 1 & 0 & 1 & 1 & 1 \\
3 & 2 & 2 & 1 & 1 & 0 & 1 & 1 \\
3 & 2 & 2 & 1 & 1 & 1 & 0 & 1 \\
3 & 3 & 2 & 2 & 1 & 1 & 1 & 0 \\
\end{bmatrix}, E_1=\begin{bmatrix}
1 & 0 & 0 & 1 & 0 & 0 & 0 & 0 \\
0 & 1 & -1 & -1 & 1 & 0 & 0 & 0 \\
0 & 0 & 1 & -1 & -1 & 1 & 0 & 0 \\
0 & 0 & 0 & 1 & -1 & -1 & 0 & 1 \\
0 & 0 & 0 & 0 & 1 & -1 & 0 & -1 \\
0 & 0 & 0 & 0 & 0 & 1 & -1 & 0 \\
0 & 0 & 0 & 0 & 0 & 0 & 1 & -1 \\
0 & 0 & 0 & 0 & 0 & 0 & 0 & 1 \\
\end{bmatrix}\]
\[D_2=\begin{bmatrix}
0 & 1 & 1 & 1 & 1 & 1 & 1 & 1 \\
1 & 0 & 1 & 2 & 2 & 2 & 2 & 2 \\
1 & 1 & 0 & 1 & 2 & 2 & 2 & 2 \\
1 & 2 & 1 & 0 & 1 & 2 & 2 & 2 \\
1 & 2 & 2 & 1 & 0 & 1 & 1 & 2 \\
1 & 2 & 2 & 2 & 1 & 0 & 1 & 1 \\
1 & 2 & 2 & 2 & 1 & 1 & 0 & 1 \\
1 & 2 & 2 & 2 & 2 & 1 & 1 & 0 \\
\end{bmatrix}, E_2=\begin{bmatrix}
1 & 0 & 0 & 0 & 0 & 0 & 0 & 0 \\
0 & 1 & -1 & 0 & 0 & 0 & 0 & 0 \\
0 & 0 & 1 & -1 & 0 & 0 & 0 & 0 \\
0 & 0 & 0 & 1 & -1 & 0 & 0 & 0 \\
0 & 0 & 0 & 0 & 1 & -1 & 0 & 0 \\
0 & 0 & 0 & 0 & 0 & 1 & -1 & 0 \\
0 & 0 & 0 & 0 & 0 & 0 & 1 & -1 \\
0 & 0 & 0 & 0 & 0 & 0 & 0 & 1 \\
\end{bmatrix}\]
Notice that the distance matrix and the reducing matrix depends on the choice of $a_k$.  However, by direct computation we will see that
\[E_1\trans D_1 E_1=E_2\trans D_2E_2=\begin{bmatrix}
0 & 1 & 0 & 0 & 0 & 0 & 0 & 0 \\
1 & 0 & 1 & 1 & 0 & 0 & 0 & 0 \\
0 & 1 & -2 & 0 & 1 & 0 & 0 & 0 \\
0 & 1 & 0 & -2 & 0 & 1 & 0 & 0 \\
0 & 0 & 1 & 0 & -2 & 0 & 0 & 1 \\
0 & 0 & 0 & 1 & 0 & -2 & 1 & -1 \\
0 & 0 & 0 & 0 & 0 & 1 & -2 & 1 \\
0 & 0 & 0 & 0 & 1 & -1 & 1 & -2 \\
\end{bmatrix},\]
which is the weighted adjacency matrix of the reduced graph as shown in Figure~\ref{redG}.
\end{exam}

Example~\ref{E1E2} illustrates the main result Theorem~\ref{finalthm}:  Given a non-leaping sequence $s$, for any graph $G\in\CP{s}$, its distance matrix $D$ and its reducing matrix $E$ always have $E\trans DE$ equal to the weighted adjacency matrix of the reduced graph.  Before showing Theorem~\ref{finalthm}, we need some lemmas.

\begin{lem}
\label{abclem}
Suppose $G$ is a graph constructed by a neighborhood sequence.  If $\{a,b\}\in E(G)$ with $a<b$, then $a$ is adjacent to any $c$ with $a<c<b$.
\end{lem}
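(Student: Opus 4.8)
The plan is to translate the adjacency condition into membership in the backward-neighborhood sets $W_k$ and then drive a backward induction using the nesting supplied by Observation~\ref{firstobs}. By construction of $G$ from the neighborhood sequence, for any two vertices $i<j$ the pair $\{i,j\}$ is an edge if and only if $i\in W_j$, since $W_j$ is exactly the set of neighbors of $j$ among $\{1,\ldots,j-1\}$. Hence the hypothesis $\{a,b\}\in E(G)$ with $a<b$ says precisely that $a\in W_b$, and the conclusion to be proved is that $a\in W_c$ for every $c$ with $a<c<b$. (When $b=a+1$ there is no such $c$ and the statement is vacuous, so I may assume $b\geq a+2$.)

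First I would establish, by backward induction on $j$, the claim that $a\in W_j$ for every $j$ in the range $a+1\leq j\leq b$. The base case $j=b$ is exactly the hypothesis $a\in W_b$. For the inductive step, suppose $a\in W_j$ for some $j$ with $j\geq a+2$. Then $j-1\geq a+1>a$, so $a\neq j-1$, and therefore $a\in W_j\setminus\{j-1\}$. Observation~\ref{firstobs} gives $W_j\setminus\{j-1\}\subseteq W_{j-1}$, so $a\in W_{j-1}$. Since the smallest index from which the step is taken is $j=a+2\geq 3$, Observation~\ref{firstobs} applies at every stage of the descent. Iterating from $j=b$ down to $j=a+1$ yields $a\in W_j$ throughout the claimed range; in particular $a\in W_c$ for each $c$ with $a<c<b$, which by the membership criterion means $a$ is adjacent to $c$, as required.

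The only point requiring care is the index bookkeeping in the inductive step: in passing from $W_j$ to $W_{j-1}$ via Observation~\ref{firstobs}, the single element that may be lost is the current top index $j-1$, so I must confirm that this removed index is never $a$ itself. This is guaranteed exactly because the induction is run only for $j\geq a+2$, which forces $j-1\geq a+1>a$; the descent therefore halts precisely at $j=a+1$ without ever discarding $a$. Apart from this observation, the argument is a direct unwinding of the definitions and the nesting relation, so I do not anticipate any substantive obstacle beyond verifying that the range endpoints line up with the regime $k\geq 3$ in which Observation~\ref{firstobs} is stated.
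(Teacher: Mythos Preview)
Your proof is correct and follows essentially the same approach as the paper: both translate the hypothesis to $a\in W_b$ and then apply Observation~\ref{firstobs} repeatedly to descend from $W_b$ to $W_{a+1}$, noting that the discarded element $j-1$ is never equal to $a$. Your version is simply more explicit about the index bookkeeping (in particular the verification that $j\geq a+2\geq 3$ so that the observation applies), whereas the paper compresses this into a two-line inductive remark.
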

\begin{proof}
By Observation~\ref{firstobs}, $W_b\setminus\{b-1\}\subseteq W_{b-1}$.  Therefore, $\{a,b\}\in E(G)$ implies $\{a,b-1\}\in E(G)$ if $a<b-1$.  Inductively, $a$ is adjacent to any $c$ with $a<c<b$.
\end{proof}

\begin{cor}
\label{abccor}
Suppose $G$ is a graph constructed by a neighborhood sequence.  For any two vertices $a<b$, each vertex $k$ on any shortest path  $a$ to $b$ has $k\leq b$.
\end{cor}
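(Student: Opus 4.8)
The plan is to argue by contradiction, exploiting Lemma~\ref{abclem} to produce a shortcut along any shortest path that wanders past $b$. Fix $a<b$ and a shortest path $P$ from $a$ to $b$. Since $P$ is a shortest path, it is in particular a path with no repeated vertices. Suppose, toward a contradiction, that some vertex on $P$ exceeds $b$, and let $m$ be the largest vertex appearing on $P$, so that $m>b$.

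The first step is to observe that $m$ must be an internal vertex of $P$. Indeed, the two endpoints of $P$ are $a$ and $b$, and $m>b>a$, so $m$ is neither endpoint; hence $m$ has exactly two neighbors $u$ and $v$ along $P$. Because $m$ is the maximum vertex on $P$, both $u<m$ and $v<m$, and since $P$ has no repeated vertices, $u\neq v$. Relabeling if necessary, assume $u<v$, so that $u<v<m$.

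The second step applies the lemma. Since $m$ is adjacent to $u$ on the path, $\{u,m\}\in E(G)$ with $u<m$, so Lemma~\ref{abclem} guarantees that $u$ is adjacent to every $c$ with $u<c<m$; in particular $u$ is adjacent to $v$. Replacing the length-two subpath $u\,$--$\,m\,$--$\,v$ of $P$ by the single edge $\{u,v\}$ yields a shorter walk between $a$ and $b$, contradicting the assumption that $P$ is a shortest path. Therefore no vertex of $P$ exceeds $b$, i.e.\ every vertex $k$ on $P$ satisfies $k\leq b$.

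The argument is short and the only point requiring care is the second step: one must check that the neighbor $v$ really falls in the open interval $(u,m)$ so that the lemma applies, which is exactly why we order $u<v$ and use that $m$ is the maximum. I do not anticipate any genuine obstacle, as the maximality of $m$ forces both path-neighbors below it and the lemma then converts the detour through $m$ into a direct edge.
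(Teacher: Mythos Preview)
Your proof is correct and follows essentially the same approach as the paper: a contradiction argument that uses Lemma~\ref{abclem} to produce a shortcut along any shortest path that strays above $b$. The only minor difference is which vertex you single out---you take the global maximum $m$ on the path and bypass it via the edge $\{u,v\}$ between its two path-neighbors, whereas the paper instead takes the first index $i$ with $v_i<b<v_{i+1}$ and uses Lemma~\ref{abclem} on the edge $\{v_i,v_{i+1}\}$ to conclude that $v_i$ is adjacent to $b$ itself, shortcutting directly to the endpoint; both choices work equally well.
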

\begin{proof}
Suppose $a=v_1,\ldots,v_d=b$ be a shortest path from $a$ to $b$.  Suppose $v_i<b<v_{i+1}$ for some $i$.  Then $v_i$ is adjacent to $b$ by Lemma~\ref{abclem}, giving a shorter path from $a$ to $b$.
\end{proof}

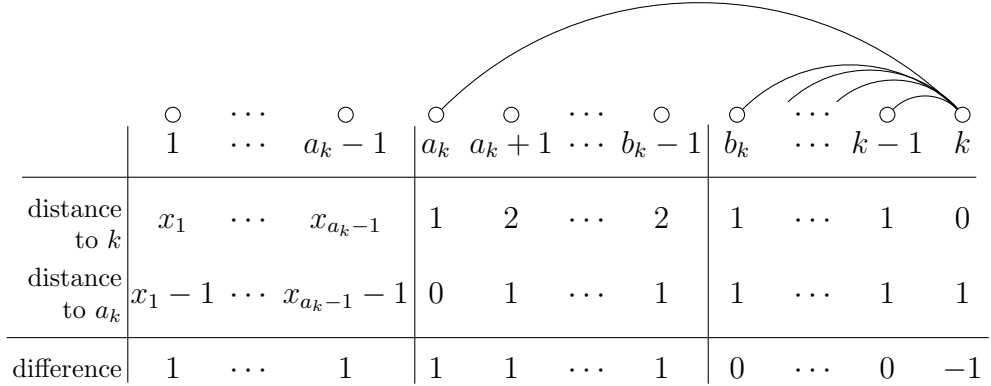
\begin{figure}[h]
\begin{center}
\begin{tikzpicture}
\node [rectangle, draw=none, matrix, matrix of math nodes, ampersand replacement=\&,
column sep={1cm,between origins},
row sep={1cm,between origins},
row 1 column 2/.style={nodes={rectangle,draw=none,anchor=center}},
row 1 column 6/.style={nodes={rectangle,draw=none,anchor=center}},
row 1 column 9/.style={nodes={rectangle,draw=none,anchor=center}},
row 2/.style={nodes={rectangle, draw=none}},
row 3/.style={nodes={rectangle, draw=none}},
row 4/.style={nodes={rectangle, draw=none}},
row 5/.style={nodes={rectangle, draw=none}}
] (A) at (0,0)
{ {} \& \cdots \&[0.3cm] {} \&[0.2cm] {} \& {} \& \cdots \& {} \& {} \& \cdots \& {} \& {} \\[-0.5cm]
 1 \& \cdots \& a_k-1 \& a_k \& a_k+1 \& \cdots \& b_k-1 \& b_k \& \cdots \& k-1 \& k \\
 x_1 \& \cdots \& x_{a_k-1} \& 1 \& 2 \& \cdots \& 2 \& 1 \& \cdots \& 1 \& 0 \\
 x_1-1 \& \cdots \& x_{a_k-1}-1 \& 0 \& 1 \& \cdots \& 1 \& 1 \& \cdots \& 1 \& 1 \\
  1 \& \cdots \& 1 \& 1 \& 1 \& \cdots \& 1 \& 0 \& \cdots \& 0 \& -1 \\
};
\draw (A-1-11) to[bend right=45] (A-1-10);
\draw (A-1-11) to[bend right=45] (A-1-9.north west);
\draw (A-1-11) to[bend right=45] (A-1-9.north east);
\draw (A-1-11) to[bend right=45] (A-1-8);
\draw (A-1-11) to[bend right=45] (A-1-4);

\node[rectangle,draw=none,xshift=-0.6cm,left,align=right,font=\footnotesize] (l3) at (A-3-1) {distance\\to $k$};
\node[rectangle,draw=none,xshift=-0.6cm,left,align=right,font=\footnotesize] (l4) at (A-4-1) {distance\\to $a_k$};
\node[rectangle,draw=none,xshift=-0.6cm,left,font=\footnotesize] (l5) at (A-5-1) {difference};

\draw ([yshift=0.2cm] l3.north west) -- ([yshift=0.2cm] l3.north west -| A-3-11.east);
\draw ([yshift=0.2cm] l5.north west) -- ([yshift=0.2cm] l5.north west -| A-5-11.east);
\draw (l5.south east) -- (l5.south east |- A-2-1.north);
\draw (A-4-3.east |- A-5-3.south) -- (A-4-3.east |- A-2-3.north);
\draw (A-2-7.east |- A-5-7.south) -- (A-2-7.south east |- A-2-7.north);
\end{tikzpicture}
\end{center}
\caption{An illustration of Lemma~\ref{difflem}}
\label{illlem}
\end{figure}

\begin{lem}
\label{difflem}
Let $s$ be a non-leaping sequence and $G\in\CP{s}$ with distance matrix $\calD$.  For $k\geq 2$, let $\bd=\calD(\be_k-\be_{a_k})$.  Then the $h$-th entry of $\bd$ with $h\leq k$ is
\[\bd_h=\begin{cases}
1 & \text{if }h<b_k,\\
0 & \text{if }b_k\leq h<k,\\
-1 & \text{if }h=k.
\end{cases}\]
\end{lem}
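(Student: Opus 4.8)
The plan is to read off $\bd_h = \dist(h,k)-\dist(h,a_k)$ and then to pin down $\dist(h,k)$ in each of the three ranges of $h$. Two preliminary facts drive everything. First, since $a_k\in W_k$ we have $\{a_k,k\}\in E(G)$, so by Lemma~\ref{abclem} the vertex $a_k$ is adjacent to every $c$ with $a_k<c<k$; in particular $a_k$ is adjacent to each vertex of $[b_k,k-1]$. Second, every vertex of $[b_k,k-1]$ is a neighbor of $k$. With these in hand the two easy ranges are immediate: if $h=k$ then $\dist(h,k)=0$ and $\dist(h,a_k)=1$, giving $\bd_k=-1$; if $b_k\le h<k$ then $h\in[b_k,k-1]$ is a neighbor of $k$, so $\dist(h,k)=1$, while $a_k<b_k\le h<k$ forces $\dist(h,a_k)=1$ by the first fact, giving $\bd_h=0$.

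The substantive range is $h<b_k$, where I want $\dist(h,k)=\dist(h,a_k)+1$. The inequality $\dist(h,k)\le\dist(h,a_k)+1$ is free from $\{a_k,k\}\in E(G)$. For the reverse, take a shortest $h$--$k$ path; by Corollary~\ref{abccor} all its vertices lie in $[1,k]$, so the vertex $v^\ast$ preceding $k$ lies in $\{a_k\}\cup[b_k,k-1]$ and satisfies $\dist(h,k)=\dist(h,v^\ast)+1$. Hence it suffices to prove the key claim that $\dist(h,a_k)\le\dist(h,v)$ for every $v\in[b_k,k-1]$ (the case $v^\ast=a_k$ being trivial); this yields $\dist(h,k)\ge\dist(h,a_k)+1$ and therefore $\bd_h=1$.

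To prove the claim, fix $v\in[b_k,k-1]$ and a shortest $h$--$v$ path $u_0=h,\dots,u_\ell=v$ with $\ell=\dist(h,v)$; by Corollary~\ref{abccor} every $u_i\le v\le k-1$. Since $u_0=h<b_k$ and $u_\ell=v\ge b_k$, there is a first index $i\ge1$ with $u_i\ge b_k$, so $w:=u_{i-1}<b_k$, $u_i\in[b_k,k-1]$, and $\{w,u_i\}\in E(G)$. The crux is the subclaim $\dist(w,a_k)\le1$, which I would establish by Lemma~\ref{abclem}: if $a_k<w$ then $a_k<w<k$ gives $a_k$ adjacent to $w$ by the first preliminary fact; if $w<a_k$ then $w<a_k<u_i$ together with the edge $\{w,u_i\}$ gives $w$ adjacent to $a_k$; and $w=a_k$ is trivial. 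Since the prefix $u_0,\dots,u_{i-1}$ is itself a shortest path, $\dist(h,w)=i-1$, and therefore $\dist(h,a_k)\le\dist(h,w)+\dist(w,a_k)\le(i-1)+1=i\le\ell=\dist(h,v)$, proving the claim.

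I expect the one genuinely delicate point to be the subclaim $\dist(w,a_k)\le 1$: the predecessor $w$ of the first ``high'' vertex $u_i$ can sit on either side of $a_k$, and it is exactly the two applications of Lemma~\ref{abclem} --- to the edge $\{a_k,k\}$ when $a_k<w$ and to the edge $\{w,u_i\}$ when $w<a_k$ --- that force $w$ into the closed neighborhood of $a_k$ in both cases. Everything else is bookkeeping with Corollary~\ref{abccor} to keep shortest paths inside $[1,k]$. The base value $k=2$ (where $a_2=1$, $b_2=2$, and $[b_2,1]=\varnothing$) is covered by the same argument, with the key claim vacuous.
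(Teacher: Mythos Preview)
Your proof is correct and takes essentially the same approach as the paper's: both hinge on the two-way application of Lemma~\ref{abclem} (to the edge $\{a_k,k\}$ when the predecessor sits above $a_k$, and to the crossing edge when it sits below) to show that a suitable vertex just before the path enters $W_k$ is adjacent to $a_k$. The only difference is packaging---the paper works directly on the last two vertices of a shortest $h$--$k$ path and splits into the cases $h<a_k$ and $a_k<h<b_k$, while you route through the auxiliary inequality $\dist(h,a_k)\le\dist(h,v^\ast)$ and handle all $h<b_k$ uniformly.
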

\begin{proof}
It is sufficient to verify the distances given in Figure~\ref{illlem}.  By definition, $a_2=1$ and $b_2=2$, so the result holds for $k=2$.  For the following, we assume $k\geq 3$.

Suppose $h$ is a vertex with $h<a_k$.  Pick a shortest path from $h$ to $k$, and let $b$ be the last vertex on the path before reaching $k$ and $a$ the previous vertex of $b$.  By Corollary~\ref{abccor}, $b\in W_k\subseteq [a_k,k-1]$.  If $b=a_k$, then we found a shortest path from $h$ to $k$ through $a_k$.  Suppose $b>a_k$.  If $a<a_k$, then $a$ is adjacent to $a_k$ by Lemma~\ref{abclem} since $\{a,b\}\in E(G)$ and $a<a_k<b$; if $a>a_k$, then $a$ is adjacent to $a_k$ by Lemma~\ref{abclem} since $\{a_k,k\}\in E(G)$ and $a_k<a<k$.  In either cases, we can replace $b$ by $a_k$, and then $a_k$ is adjacent to $k$.  It follows that every vertex $h$ with $h<a_k$ has a shortest path from $h$ to $k$ through $a_k$, so
\[\dist_G(h,k)=\dist_G(h,a_k)+1.\]

Suppose $h$ is a vertex with $a_k<h<b_k$.  Then $a_k$ is adjacent to $h$ by Lemma~\ref{abclem} since $\{a_k,k\}\in E(G)$ and $a_k<h<k$.  Therefore, $\dist_G(h,k)=2$ and $\dist_G(h,a_k)=1$.

Other cases are straightforward.
\end{proof}

\begin{cor}
\label{diffcor}
Let $s$ be a non-leaping sequence and $G\in\CP{s}$ with distance matrix $\calD$.  For $k\geq 3$, let $\bd=\calD(\be_{k-1}-\be_{a_{k-1}})$.  Then the $h$-th entry of $\bd$ with $h\leq k$ is
\[\bd_h=\begin{cases}
1 & \text{if }h<b_{k-1},\\
0 & \text{if }b_{k-1}\leq h<k-1,\\
-1 & \text{if }h=k-1, \\
0 & \text{if }h=k, a_{k-1}=a_k,\\
-1 & \text{if }h=k, a_{k-1}<a_k.\\
\end{cases}\]
\end{cor}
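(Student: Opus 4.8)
The plan is to recognize that this corollary is essentially a reindexing of Lemma~\ref{difflem} together with the computation of a single extra coordinate. For any $h\leq k-1$, the $h$-th entry $\bd_h=\dist_G(h,k-1)-\dist_G(h,a_{k-1})$ is exactly the $h$-th entry of $\calD(\be_{k-1}-\be_{a_{k-1}})$, which is what Lemma~\ref{difflem} computes when its index is taken to be $k-1$; this substitution is legitimate because $k\geq 3$ gives $k-1\geq 2$. Reading off that lemma with $b_{k-1}$ in place of $b_k$ immediately produces the first three cases: $\bd_h=1$ for $h<b_{k-1}$, $\bd_h=0$ for $b_{k-1}\leq h<k-1$, and $\bd_h=-1$ for $h=k-1$. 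Thus the only genuinely new work is the entry at $h=k$.

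For $h=k$, I would write $\bd_k=\dist_G(k,k-1)-\dist_G(k,a_{k-1})$. Since $q_k\geq 2$ gives $b_k\leq k-1$, we have $k-1\in[b_k,k-1]\subseteq W_k$, so $\dist_G(k,k-1)=1$ and everything reduces to evaluating $\dist_G(k,a_{k-1})$. In the case $a_{k-1}=a_k$, the vertex $a_k\in W_k$ is adjacent to $k$, so $\dist_G(k,a_{k-1})=1$ and $\bd_k=0$, matching the fourth case.

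The heart of the argument, and where I expect the only real (if modest) obstacle, is the case $a_{k-1}<a_k$, where I must show $\dist_G(k,a_{k-1})=2$. For the lower bound, note that $a_k<b_k$ makes $a_k=\min W_k$, so $a_{k-1}<a_k$ forces $a_{k-1}\notin W_k$; hence $a_{k-1}$ is not adjacent to $k$ and $\dist_G(k,a_{k-1})\geq 2$. For the upper bound I would exhibit the length-two path $a_{k-1}-a_k-k$: the edge $\{a_k,k\}$ is present since $a_k\in W_k$, and the edge $\{a_{k-1},a_k\}$ follows from Lemma~\ref{abclem} applied to $\{a_{k-1},k-1\}\in E(G)$ (present since $a_{k-1}\in W_{k-1}$) together with $a_{k-1}<a_k<k-1$, where the last inequality holds because $a_k<b_k\leq k-1$. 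Combining the two bounds yields $\dist_G(k,a_{k-1})=2$, so $\bd_k=1-2=-1$, which gives the fifth case and completes the proof.
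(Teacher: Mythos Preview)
Your proof is correct and follows essentially the same approach as the paper: apply Lemma~\ref{difflem} with index $k-1$ for the entries $h\leq k-1$, and compute $\bd_k=\dist_G(k,k-1)-\dist_G(k,a_{k-1})$ directly. The paper's own proof simply asserts $\dist_G(k,a_{k-1})=2$ when $a_{k-1}<a_k$ without justification, whereas you supply that justification via the two-sided bound (nonadjacency from $a_{k-1}\notin W_k$, and the path $a_{k-1}$--$a_k$--$k$ built with Lemma~\ref{abclem}); this is a welcome elaboration rather than a different argument.
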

\begin{proof}
The cases with $h\leq k-1$ follow from Lemma~\ref{difflem}.  If $a_{k-1}=a_k$, then
\[\dist_G(k,k-1)=1=\dist_G(k,a_{k-1})\]
and $\bd_{k}=0$.  If $a_{k-1}<a_k$, then
\[\dist_G(k,k-1)=1 \text{ and } \dist_G(k,a_{k-1})=2,\]
so $\bd_k=-1$.
\end{proof}

\begin{cor}
\label{finalcor}
Let $s$ be a non-leaping sequence and $G\in\CP{s}$ with distance matrix $\calD$.  For $k\geq 3$, let $\bb=\calD(\be_k-\be_{a_k}-\be_{k-1}+\be_{a_{k-1}})$.  Then the $h$-th entry of $\bb$ with $h\leq k$ is
\[\bb_h=\begin{cases}
0 & \text{if }h<b_{k-1},\\
1 & \text{if }b_{k-1}\leq h<b_k,\\
0 & \text{if }b_k\leq h<k-1, \\
1 & \text{if }h=k-1, \\
-1 & \text{if }h=k, a_{k-1}=a_k,\\
0 & \text{if }h=k, a_{k-1}<a_k.\\
\end{cases}\]
\end{cor}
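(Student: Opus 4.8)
The plan is to exploit the linearity of the map $\bx\mapsto\calD\bx$ and recognize $\bb$ as a difference of two vectors we have already understood. Writing
\[
\bb=\calD(\be_k-\be_{a_k})-\calD(\be_{k-1}-\be_{a_{k-1}}),
\]
set $\bd=\calD(\be_k-\be_{a_k})$, whose entries (for $h\le k$) are described by Lemma~\ref{difflem}, and set $\bd'=\calD(\be_{k-1}-\be_{a_{k-1}})$, whose entries (for $h\le k$) are described by Corollary~\ref{diffcor}. Then $\bb_h=\bd_h-\bd'_h$ for every $h\le k$, and the whole statement reduces to subtracting the two piecewise formulas and reconciling their breakpoints. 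First I would record the two inequalities that control how the breakpoints interleave, namely $b_{k-1}\le b_k\le k-1$, which were already noted after Definition~\ref{def:reduced}; these guarantee that the ranges $h<b_{k-1}$, $b_{k-1}\le h<b_k$, $b_k\le h<k-1$, and the isolated values $h=k-1$ and $h=k$ partition $[1,k]$ in exactly the way the target formula is phrased.

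Next I would run the case analysis index by index, in each case reading off $\bd_h$ from Lemma~\ref{difflem} and $\bd'_h$ from Corollary~\ref{diffcor}. For $h<b_{k-1}$ both vectors have entry $1$, giving $\bb_h=0$; for $b_{k-1}\le h<b_k$ we get $1-0=1$; for $b_k\le h<k-1$ we get $0-0=0$; for $h=k-1$ we get $0-(-1)=1$; and at $h=k$ we have $\bd_k=-1$ against $\bd'_k=0$ or $\bd'_k=-1$ according to whether $a_{k-1}=a_k$ or $a_{k-1}<a_k$, producing $-1$ or $0$ respectively. Each of these matches the claimed value, so assembling the six cases completes the proof.

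The only point requiring genuine care — and what I expect to be the main obstacle, modest as it is — lies in the middle range $b_{k-1}\le h<b_k$. There I must invoke $\bd_h=1$, which needs $h<b_k$ (immediate), while simultaneously invoking $\bd'_h=0$, which in Corollary~\ref{diffcor} is stated for $b_{k-1}\le h<k-1$; so I have to confirm that $h<b_k$ forces $h<k-1$ rather than falling into the exceptional value $h=k-1$. This follows from $b_k\le k-1$, since $h<b_k\le k-1$. A symmetric verification is needed to see that the $h=k-1$ case of the target really does fall under $\bd_{k-1}=0$, which uses $b_k\le k-1$ so that $k-1$ lies in the range $b_k\le h<k$ of Lemma~\ref{difflem}. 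Once these boundary alignments are checked against $b_{k-1}\le b_k\le k-1$, the remaining arithmetic is routine.
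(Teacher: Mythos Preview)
Your proposal is correct and takes essentially the same approach as the paper: decompose $\bb$ as $\calD(\be_k-\be_{a_k})-\calD(\be_{k-1}-\be_{a_{k-1}})$, invoke Lemma~\ref{difflem} and Corollary~\ref{diffcor} for the two pieces, and subtract case by case. If anything, your explicit verification that the breakpoints interleave correctly via $b_{k-1}\le b_k\le k-1$ is more careful than the paper, which simply aligns the two piecewise formulas side by side and reads off the differences.
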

\begin{proof}
Let $\bd^{(k)}=\calD(\be_k-\be_{a_k})$ and $\bd^{(k-1)}=\calD(\be_{k-1}-\be_{a_{k-1}})$.  By definition,
\[\bb=\calD(\be_k-\be_{a_k})-\calD(\be_{k-1}-\be_{a_{k-1}})=\bd^{(k)}-\bd^{(k-1)},\]
so the result follows directly from Lemma~\ref{difflem} and Corollary~\ref{diffcor}.    For convenience, we align the results from Lemma~\ref{difflem} and Corollary~\ref{diffcor} together.
\[\bd^{(k)}_h=\begin{cases}
1 & \text{if }h<b_{k-1},\\
1 & \text{if }b_{k-1}\leq h<b_k,\\
0 & \text{if }b_k\leq h<k-1, \\
0 & \text{if }h=k-1, \\
-1 & \text{if }h=k, a_{k-1}=a_k,\\
-1 & \text{if }h=k, a_{k-1}<a_k.\\
\end{cases}
\qquad
\bd^{(k-1)}_h=\begin{cases}
1 & \text{if }h<b_{k-1},\\
0 & \text{if }b_{k-1}\leq h<b_k,\\
0 & \text{if }b_k\leq h<k-1, \\
-1 & \text{if }h=k-1, \\
0 & \text{if }h=k, a_{k-1}=a_k,\\
-1 & \text{if }h=k, a_{k-1}<a_k.\\
\end{cases}\]
By taking the differences of the corresponding terms, this completes the proof.
\end{proof}

\begin{thm}
\label{finalthm}
Let $s$ be a non-leaping sequence with the reduced graph $H$.  For any neighborhood sequence of $s$ and the corresponding graph $G$ with distance matrix $\calD$, the matrix $E\trans\mathcal{D}E$ is the weighted adjacency matrix of $H$, where $E$ is the reducing matrix that depends on the neighborhood sequence.
\end{thm}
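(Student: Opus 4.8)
The plan is to compute $M:=E\trans\calD E$ entrywise and match it against the weighted adjacency matrix of $H$ from Definition~\ref{def:reduced}. Write $\bc_k$ for the $k$-th column of $E$ and set $\bb^{(k)}:=\calD\bc_k$, so that $M_{i,j}=\bc_i\trans\calD\bc_j=\bc_i\trans\bb^{(j)}$. First I would record two structural facts. Since the support of $\bc_k$ lies in $[1,k]$ with a $1$ in position $k$, the matrix $E$ is upper unitriangular; in particular $M$ is symmetric, so it suffices to determine $M_{i,j}$ for $i\leq j$. Moreover, for $i\leq j$ the support of $\bc_i$ is contained in $[1,i]\subseteq[1,j]$, and Lemma~\ref{difflem} together with Corollary~\ref{finalcor} determines every entry $\bb^{(j)}_h$ with $h\leq j$ (for $j\geq 3$ this is the vector $\bb$ of Corollary~\ref{finalcor}; for $j\in\{1,2\}$ we have $\bb^{(j)}=\calD\be_j$, a column of $\calD$). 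Hence each $M_{i,j}$ with $i\leq j$ is computable from data already in hand.

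The diagonal and the first two indices I would treat directly. From $\bc_1=\be_1$ and $\bc_2=\be_2$ one gets $M_{1,1}=M_{2,2}=0$ and $M_{1,2}=\dist_G(1,2)=1$, matching the vertex weights $0$ and the edge $\{1,2\}$ of weight $1$. For $k\geq 3$,
\[M_{k,k}=\bb^{(k)}_k-\bb^{(k)}_{a_k}-\bb^{(k)}_{k-1}+\bb^{(k)}_{a_{k-1}}.\]
Since $a_{k-1}<b_{k-1}$ gives $\bb^{(k)}_{a_{k-1}}=0$, and since $a_k<b_k$ with $a_k\geq b_{k-1}$ exactly when $a_{k-1}<a_k$, Corollary~\ref{finalcor} yields $-1-0-1+0=-2$ when $a_{k-1}=a_k$ and $0-1-1+0=-2$ when $a_{k-1}<a_k$. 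Both equal the vertex weight $-2$, and this is the place where the choice of $a_k$ could matter but does not.

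The heart is the off-diagonal part with $3\leq i<j$ (the cases $i\in\{1,2\}$ being read off as above, e.g.\ $M_{2,j}=\bb^{(j)}_2$). The key observation is that the two $a$-terms vanish: since $i\leq j-1$, Observation~\ref{firstobs} gives $a_i<b_i\leq b_{j-1}$ and $a_{i-1}<b_{i-1}\leq b_i\leq b_{j-1}$, so $a_i$ and $a_{i-1}$ both lie strictly below the support threshold $b_{j-1}$ of $\bb^{(j)}$ and contribute $0$. This is precisely what forces $M$ to be independent of the chosen neighborhood sequence. Consequently
\[M_{i,j}=\bb^{(j)}_i-\bb^{(j)}_{i-1}\qquad(3\leq i<j),\]
a discrete difference of the step pattern of $\bb^{(j)}$ in Corollary~\ref{finalcor}. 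It is nonzero only at the jumps, namely $i=b_{j-1}$ (value $+1$), $i=b_j$ (value $-1$), and $i=j-1$ (value $+1$), reproducing the edges $\{b_{j-1},j\}$, $\{b_j,j\}$, $\{j-1,j\}$ of Definition~\ref{def:reduced}.

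The main obstacle will be the bookkeeping of the degenerate cases in which these distinguished indices coincide, which must reproduce the edge-merging of Definition~\ref{def:reduced}. When $b_{j-1}=b_j$ the interval $[b_{j-1},b_j-1]$ is empty, so $\bb^{(j)}$ never rises and the $+1$/$-1$ jumps annihilate, matching the cancellation of $\{b_{j-1},j\}$ and $\{b_j,j\}$; when $b_j=j-1$ the value $\bb^{(j)}_{j-2}=1$ (valid since $q_{j-1}\geq 2$ gives $b_{j-1}\leq j-2$ for $j\geq 4$) cancels $\bb^{(j)}_{j-1}=1$, matching the cancellation of $\{b_j,j\}$ and $\{j-1,j\}$; and for $j=3$ all three collapse to the single edge $\{2,3\}$ of weight $1$, recovered instead from $M_{2,3}=\bb^{(3)}_2=1$. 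Checking that the step-difference produces each weight of $H$ across every such coincidence, with the boundary inequalities $b_{j-1}-1<b_{j-1}$ and $b_{j-1}\leq j-2$ verified where needed, is where the care lies; the conceptual content—that the $a$-indices fall below the support of $\bb^{(j)}$—is the clean step that collapses the whole computation.
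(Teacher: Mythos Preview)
Your proposal is correct and follows essentially the same approach as the paper. Both arguments compute $E\trans\calD E$ entrywise via Corollary~\ref{finalcor}, and the decisive step in each is the observation that for $i<j$ the indices $a_i,a_{i-1}$ lie strictly below $b_{j-1}$, which kills the $a$-dependent terms and forces independence from the neighborhood sequence; the paper packages the off-diagonal result as the vector identity $E\trans\calD E\,\be_j=\be_{j-1}-\be_{b_j}+\be_{b_{j-1}}+\by'$, while you phrase it as the discrete difference $\bb^{(j)}_i-\bb^{(j)}_{i-1}$ picking out the same three jumps, but this is the same computation.
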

\begin{proof}
Let $R=E\trans DE=\begin{bmatrix}r_{i,j}\end{bmatrix}$.  Since $R$ is a symmetric matrix, it is sufficient to show that $R$ is the same as the weighted adjacency matrix of $H$ for the $i,j$-entry with $i\leq j$.

Since the principal submatrix of $R$ on the first two columns and rows is $\begin{bmatrix}0&1\\1&0\end{bmatrix}$, which agrees with $H$, we assume $j\geq 3$.

Let $\bphi_{i,j}=\sum_{h=i}^j \be_h$, which is defined as a zero vector when $j<i$. By Corollary~\ref{finalcor},
\[\calD E\be_j=\bphi_{b_{j-1},b_j-1}+\be_{j-1}-s\be_j+\by,\]
where $s=1,0$ depending on $a_{j-1}=a_j$ or not and $\by$ is a vector that vanish on all entries from $1$ to $j$.

First, we examine the diagonal entries.  For $i=j$ and $i,j\geq 3$,
\[\begin{aligned}
r_{j,j} &= \be_j\trans E\trans\calD E\be_j\\
 &= (\be_j-\be_{a_j}-\be_{j-1}+\be_{a_{j-1}})\trans(\bphi_{b_{j-1},b_j-1}+\be_{j-1}-s\be_j+\by) \\
 &= (\be_j-\be_{a_j}-\be_{j-1})\trans(\bphi_{b_{j-1},b_j-1}+\be_{j-1}-s\be_j) \\
 &=
 \begin{cases}
 (\be_j-\be_{j-1})\trans(\bphi_{b_{j-1},b_j-1}+\be_{j-1}-\be_j) & \text{if }a_{j-1}=a_j \\
 (\be_j-\be_{a_j}-\be_{j-1})\trans(\bphi_{b_{j-1},b_j-1}+\be_{j-1}) & \text{if }a_{j-1}<a_j\\
 \end{cases} \\
 &=-2. \\
\end{aligned}\]
Here we use the fact that $a_{j-1}=a_j$ implies $a_j<b_{j-1}$, and $a_{j-1}<a_j$ implies $b_{j-1}\leq a_j\leq b_j-1$.

If $j>i\geq 3$, then $a_i,a_{i-1}<b_{j-1}$.  Thus,
\[\begin{aligned}
r_{i,j} &= \be_i\trans E\trans\calD E\be_j\\
 &= (\be_i-\be_{a_i}-\be_{i-1}+\be_{a_{i-1}})\trans(\bphi_{b_{j-1},b_j-1}+\be_{j-1}-s\be_j+\by) \\
 &= (\be_i-\be_{i-1})\trans(\bphi_{b_{j-1},b_j-1}+\be_{j-1}) \\
 &= \be_i\trans\be_{j-1}+\be_i\trans\bphi_{b_{j-1},b_j-1}-\be_{i-1}\trans\bphi_{b_{j-1},b_j-1} \\
 &= \be_i\trans\be_{j-1}+\be_i\trans\bphi_{b_{j-1},b_j-1}-\be_{i}\trans\bphi_{b_{j-1}+1,b_j}. \\
\end{aligned}\]
In fact, this formula holds also for $i=1,2$ and $j\geq 3$.  When $i=1,2$, \[\begin{aligned}
r_{i,j} &= \be_i\trans E\trans\calD E\be_j\\
 &= \be_i\trans(\bphi_{b_{j-1},b_j-1}+\be_{j-1}-s\be_j+\by) \\
 &= \be_i\trans(\bphi_{b_{j-1},b_j-1}+\be_{j-1}) \\
 &= \be_i\trans\be_{j-1}+\be_i\trans\bphi_{b_{j-1},b_j-1}. \\
\end{aligned}\]
The two formulas agree since $b_{j-1}\geq 2$ for any $j$ and $\be_{i}\trans\bphi_{b_{j-1}+1,b_j+1}$ vanishes when $i=1,2$.  Therefore, for $j\geq 3$ and $i< j$,
\[r_{i,j}=\be_i\trans(\be_{j-1}+\bphi_{b_{j-1},b_j-1}-\bphi_{b_{j-1}+1,b_j}).\]
In other words,
\[\begin{aligned}
E\trans\calD E\be_j &=\be_{j-1}+\bphi_{b_{j-1},b_j-1}-\bphi_{b_{j-1}+1,b_j} + \by'\\
 &= \be_{j-1}-\be_{b_j}+\be_{b_{j-1}}+\by'.
\end{aligned},\]
where $\by'$ is a vector that vanishes on entries from $1$ to $j-1$.  Therefore, $R$ is the weighted adjacency matrix of $H$.
\end{proof}

\begin{cor}
\label{cor:detinertia}
Let $s$ be a non-leaping sequence with the reduced graph $H$.  Then for any graph $G\in \CP{s}$, both $\detD(G)$ and $\inertiaD(G)$ are uniquely determined by $s$.  Indeed, $\detD(G)=\det(A)$ and $\inertiaD(G)=\inertia(A)$ for any $G\in\CP{s}$, where $A$ is the weighted adjacency matrix of $H$.
\end{cor}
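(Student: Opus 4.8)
The plan is to derive the entire statement from Theorem~\ref{finalthm} together with one structural fact about the reducing matrix $E$, namely that it is invertible with $\det(E)=1$. First I would inspect the columns of $E$. For $k\in\{1,2\}$ the $k$-th column is $\be_k$, and for $k\geq 3$ it is $\be_k-\be_{a_k}-\be_{k-1}+\be_{a_{k-1}}$. Since $a_k<b_k\leq k-1$ and $a_{k-1}<b_{k-1}\leq k-2$, every basis vector other than $\be_k$ appearing in the $k$-th column has index strictly smaller than $k$. Hence $E$ is upper triangular with all diagonal entries equal to $1$, so that $\det(E)=1$ and $E$ is invertible.

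With $E$ invertible, I would invoke Theorem~\ref{finalthm}, which gives $E\trans\calD E=A$, where $A$ is the weighted adjacency matrix of the reduced graph $H$. Taking determinants and using multiplicativity yields
\[\det(A)=\det(E)^2\det(\calD)=\det(\calD)=\detD(G).\]
For the inertia, the identity $E\trans\calD E=A$ exhibits $A$ and $\calD$ as congruent symmetric matrices, precisely because $E$ is invertible. By Sylvester's law of inertia, congruent symmetric matrices have the same inertia, so $\inertia(A)=\inertia(\calD)=\inertiaD(G)$.

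Finally I would observe that $A$ depends only on $s$ and not on the particular neighborhood sequence chosen. Indeed, Definition~\ref{def:reduced} builds $H$ solely from the values $b_k$ and the fixed vertex weights, and each $b_k=k-q_k+1$ is an invariant of $s$ (as noted in Example~\ref{exG1G2}). Hence $\det(A)$ and $\inertia(A)$ are functions of $s$ alone, and the two displayed equalities then show that $\detD(G)$ and $\inertiaD(G)$ coincide with these $s$-determined quantities for every $G\in\CP{s}$.

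There is essentially no hard step: the substantive work is already carried by Theorem~\ref{finalthm}, and the only thing to check independently is that $E$ is unitriangular, which is immediate from the inequalities $a_k,a_{k-1},k-1<k$. The one subtlety worth stating explicitly is that the congruence $E\trans\calD E=A$ is exactly what licenses the appeal to Sylvester's law of inertia; had $E$ failed to be invertible, the inertia conclusion would not follow from the determinant computation alone.
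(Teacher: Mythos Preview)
Your proof is correct and follows essentially the same approach as the paper: both invoke Theorem~\ref{finalthm}, observe that $E$ is upper triangular with unit diagonal, and conclude the determinant equality directly and the inertia equality via congruence (Sylvester's law). Your version simply makes explicit a few points the paper leaves implicit, such as the index inequalities forcing $E$ to be unitriangular and the fact that $H$ depends only on $s$.
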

\begin{proof}
Let $\calD$ be the distance matrix of $G$ and $A$ the weighted adjacency matrix of $H$.  By Theorem~\ref{finalthm}, $A=E\trans\calD E$, where $E$ is the reducing matrix corresponding to $G$.  Since $E$ is an upper triangular matrix with diagonal entries equal to $1$, $\det(\calD)=\det(A)$.  Since $\calD$ is congruent to $A$, the inertia of $\calD$ is the same as the inertia of $A$.
\end{proof}

Let $J_k$ be the $k\times k$ all-one matrix.  Let $J_{2,n}$ be the matrix obtained by embedding $J_2$ to the top-left corner of the $n\times n$ zero matrix.

\begin{cor}
\label{cor:cof}
Let $s$ be a non-leaping sequence of length $n$ with the reduced graph $H$.  Then for any graph $G\in \CP{s}$, the value of $\cofD(G)$ is uniquely determined by $s$.  Indeed, $\cofD(G)=\det(A+J_{2,n})-\det(A)$ for any $G\in\CP{s}$, where $A$ is the weighted adjacency matrix of $H$.
\end{cor}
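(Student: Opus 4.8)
The plan is to deduce the corollary from Theorem~\ref{finalthm} together with one general matrix identity relating the cofactor sum to a rank-one update of the determinant. The identity I would use is
\[\cof(M)=\det(M+J_n)-\det(M)\]
for every $n\times n$ matrix $M$, where $J_n=\bone\bone\trans$. This follows from the matrix determinant lemma $\det(M+\bone\bone\trans)=\det(M)+\bone\trans\operatorname{adj}(M)\bone$, which holds for all $M$ and not just invertible ones, since both sides are polynomials in the entries of $M$; one then observes that $\bone\trans\operatorname{adj}(M)\bone=\sum_{i,j}(-1)^{i+j}\det(M(i|j))=\cof(M)$. I would record this as a one-line preliminary so that it can be applied to $M=\calD$.

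The crux is a short computation showing that the congruence $A=E\trans\calD E$ supplied by Theorem~\ref{finalthm} also transports the full all-one matrix to $J_{2,n}$. First I would compute $E\trans\bone$: its $k$-th entry is the sum of the entries of the $k$-th column of $E$, which equals $1$ for $k\in\{1,2\}$ (the column is $\be_k$) and equals $1-1-1+1=0$ for $k\geq 3$ (the column is $\be_k-\be_{a_k}-\be_{k-1}+\be_{a_{k-1}}$). Hence $E\trans\bone=\be_1+\be_2$, and therefore
\[E\trans J_n E=(E\trans\bone)(E\trans\bone)\trans=(\be_1+\be_2)(\be_1+\be_2)\trans=J_{2,n},\]
since $(\be_1+\be_2)(\be_1+\be_2)\trans$ has a $1$ in precisely the four positions $(i,j)$ with $i,j\in\{1,2\}$.

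With these two ingredients the corollary follows quickly. Since $A=E\trans\calD E$ and $E\trans J_n E=J_{2,n}$, linearity gives $A+J_{2,n}=E\trans(\calD+J_n)E$. As $E$ is upper triangular with unit diagonal, $\det E=1$, so $\det(A+J_{2,n})=\det(\calD+J_n)$ and $\det(A)=\det(\calD)$. Applying the preliminary identity to $M=\calD$ then yields
\[\cofD(G)=\cof(\calD)=\det(\calD+J_n)-\det(\calD)=\det(A+J_{2,n})-\det(A),\]
and the right-hand side depends only on $s$ through its reduced graph $H$. I expect the only delicate points to be the justification of the determinant identity in the singular case, which the polynomial-identity remark dispatches, and the bookkeeping that the column sums of $E$ vanish for $k\geq 3$; everything else is a direct consequence of Theorem~\ref{finalthm} and the fact that $\det E=1$.
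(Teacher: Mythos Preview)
Your proof is correct and follows essentially the same route as the paper: both use Theorem~\ref{finalthm}, observe that the column sums of $E$ vanish for $k\geq 3$ so that $E\trans J_nE=J_{2,n}$, and then apply the identity $\cof(M)=\det(M+J_n)-\det(M)$ (which the paper cites from \cite[Lemma~9.3]{BapatGM14} while you derive it via the matrix determinant lemma).
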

\begin{proof}
Let $\calD$ be the distance matrix of $G$ and $A$ the weighted adjacency matrix of $H$.  By Theorem~\ref{finalthm}, $A=E\trans\calD E$, where $E$ is the reducing matrix corresponding to $G$.  Notice that $E$ has column sums zero except for the first and the second columns.  Therefore,
\[E\trans(\calD+J)E=A+J_{2,n}.\]
By \cite[Lemma 9.3]{BapatGM14},
\[\begin{aligned}
\cofD(G) &= \det(\calD+J_n)-\det(\calD) \\
 &= \det(E\trans(\calD+J_n)E)-\det(A) \\
 &= \det(A+J_{2,n})-\det(A).
\end{aligned}\]
Therefore, the value $\cofD(G)$ is determined by $s$.
\end{proof}

\section{Attaching the CP graphs}
\label{sec:attaching}

Let $q_1,\ldots,q_n$ be a non-leaping sequence.
For any connected graph $G_0$ with an edge $e=\{v_1,v_2\}$ and for any $G\in\CP{q_1,\ldots,q_n}$, define $G_0\oplus_eG$ as the graph obtained from $G_0\dunion G$ by identifying the edges $\{v_1,v_2\}\in E(G_0)$ and $\{1,2\}\in E(G)$ with $v_1$ to $1$ and $v_2$ to $2$.

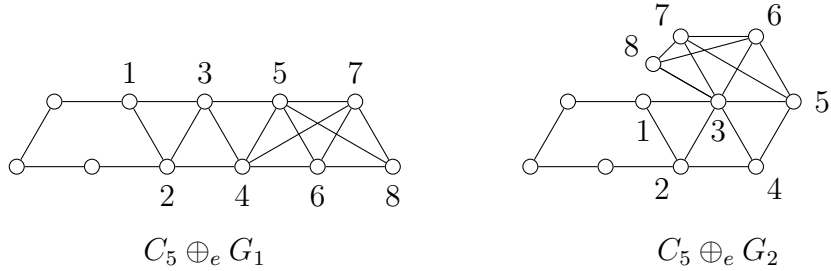
\begin{figure}[h]
\begin{center}
\begin{tikzpicture}
\node[label={above:$1$}] (1) at (-1,0) {};
\node[label={above:$3$}] (3) at (0,0) {};
\node[label={above:$5$}] (5) at (1,0) {};
\node[label={above:$7$}] (7) at (2,0) {};
\node[label={below:$2$}] (2) at (240:1) {};
\node[label={below:$4$},xshift=1cm] (4) at (240:1) {};
\node[label={below:$6$},xshift=2cm] (6) at (240:1) {};
\node[label={below:$8$},xshift=3cm] (8) at (240:1) {};
\node[label={above:}] (-1) at (-2,0) {};
\node[label={above:},xshift=-2cm] (-2) at (240:1) {};
\node[label={above:},xshift=-1cm] (-3) at (240:1) {};
\draw (1) -- (3) -- (5) -- (7) -- (8) -- (6) -- (4) -- (2) -- (1);
\draw (2) -- (3) -- (4) -- (5) -- (6) -- (7);
\draw (4) -- (7);
\draw (5) -- (8);
\draw (1) -- (-1) -- (-2) -- (-3) -- (2);
\node[rectangle,draw=none] at (0,-2) {$C_5\oplus_eG_1$};
\end{tikzpicture}
\hfil
\begin{tikzpicture}
\node[label={below:$3$}] (3) at (0,0) {};
\foreach \v/\ang in {2/240,4/300,5/0,6/60,7/120,8/150}{
\node[label={\ang:$\v$}] (\v) at (\ang:1) {};
\draw (3) -- (\v);
}
\node[label={below:$1$}] (1) at (180:1) {};
\node[label={above:}] (-1) at (-2,0) {};
\node[label={above:},xshift=-2cm] (-2) at (240:1) {};
\node[label={above:},xshift=-1cm] (-3) at (240:1) {};
\draw (3) -- (1) -- (2) -- (4) -- (5) -- (6) -- (7) -- (8) -- (3);
\draw (5) -- (7);
\draw (6) -- (8);
\draw (1) -- (-1) -- (-2) -- (-3) -- (2);
\node[rectangle,draw=none] at (0,-2) {$C_5\oplus_eG_2$};
\end{tikzpicture}
\end{center}
\caption{Two graphs $C_5\oplus_e G_1$ and $C_5\oplus_e G_2$ with $G_1,G_2\in\CP{0,1,2,2,2,2,3,3}$}
\label{C5G1G2}
\end{figure}

\begin{exam}
Let $C_5$ be the five cycle and $G_1,G_2$ the two graphs shown in Figure~\ref{G1G2}.  Let $e$ be an edge on $C_5$.  Then the two graphs $C_5\oplus_e G_1$ and $C_5\oplus_e G_2$ are shown in Figure~\ref{C5G1G2}.  Theorem~\ref{thm:attach} will show that $\detD(C_5\oplus_e G_1)=\detD(C_5\oplus_e G_2)$.
\end{exam}

\begin{obs}
\label{attachobs}
If $G_0$ is a connected graph with an edge $e=\{v_1,v_2\}$ and $G\in\CP{q_1,\ldots,q_n}$, then
\begin{enumerate}[label={\rm (\roman*)}]
\item $\dist_{G_0\oplus_eG}(x,y)=\dist_{G_0}(x,y)$ for $x,y\in V(G_0)$,
\item $\dist_{G_0\oplus_eG}(i,j)=\dist_{G}(i,j)$ for $i,j\in V(G)$,
\item for any $x\in V(G_0)$, $\dist_{G_0\oplus_e G}(x,3)$ is independent of the choice of $G$.
\end{enumerate}
\end{obs}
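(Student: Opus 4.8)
The plan is to exploit that in $G_0\oplus_e G$ the two identified vertices $\{v_1,v_2\}=\{1,2\}$ form a vertex cut. Since $G_0\oplus_eG$ is built from the disjoint union $G_0\dunion G$ by gluing only along the edge $e$, there is no edge joining $V(G_0)\setminus\{v_1,v_2\}$ to $V(G)\setminus\{1,2\}$; hence every walk from the ``$G_0$-side'' to the ``$G$-side'' must pass through $1$ or $2$. I will use this separation throughout, together with the fact that $\dist_{G_0}(v_1,v_2)=1=\dist_G(1,2)$ because $e$ is an edge.

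For (i), every path of $G_0$ is also a path of $G_0\oplus_eG$, so $\dist_{G_0\oplus_eG}(x,y)\le\dist_{G_0}(x,y)$. For the reverse inequality I would take a shortest $x$--$y$ path $P$ (with $x,y\in V(G_0)$) in $G_0\oplus_eG$ and consider any maximal subpath whose interior vertices lie in $V(G)\setminus\{1,2\}$; by the cut property its two endpoints lie in $\{1,2\}$, and since $P$ repeats no vertex they are exactly $1$ and $2$, so this excursion has length at least $\dist_G(1,2)=1$. Replacing each such excursion by the edge $e$ yields an $x$--$y$ walk inside $G_0$ of length at most $\dist_{G_0\oplus_eG}(x,y)$, whence $\dist_{G_0}(x,y)\le\dist_{G_0\oplus_eG}(x,y)$. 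Part (ii) is symmetric, rerouting any excursion through $V(G_0)\setminus\{1,2\}$ along the edge $\{1,2\}$ of $G$.

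For (iii) the separation lets me decompose the distance. If $x\in\{v_1,v_2\}$ the claim is immediate from (ii); otherwise every $x$--$3$ path meets $\{1,2\}$, and splitting at the first such vertex, together with (i) and (ii), gives
\[
\dist_{G_0\oplus_eG}(x,3)=\min_{u\in\{1,2\}}\bigl(\dist_{G_0}(x,u)+\dist_G(u,3)\bigr).
\]
The quantities $\dist_{G_0}(x,v_1)$ and $\dist_{G_0}(x,v_2)$ depend only on $G_0$. The one place where the CP structure enters is that $\dist_G(1,3)$ and $\dist_G(2,3)$ are the same for every $G\in\CP{q_1,\dots,q_n}$: since $q_3=2$ forces $b_3=2$ and hence $a_3=1$, we have $W_3=\{1,2\}$ in every neighborhood sequence, so by Observation~\ref{firstobs} vertex $3$ is adjacent to both $1$ and $2$, giving $\dist_G(1,3)=\dist_G(2,3)=1$. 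Substituting, $\dist_{G_0\oplus_eG}(x,3)=1+\min\{\dist_{G_0}(x,v_1),\dist_{G_0}(x,v_2)\}$, which does not depend on the choice of $G$.

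The only genuinely delicate point is the rerouting argument in (i) and (ii): one must check that each excursion onto the other side really has both endpoints in the cut $\{1,2\}$ (guaranteed by the absence of cross edges) and that replacing it by a single edge never shortens the path below the true distance. Everything else collapses to the single structural observation $W_3=\{1,2\}$, which makes (iii) essentially automatic once (i) and (ii) are in hand.
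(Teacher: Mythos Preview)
Your argument is correct. The paper does not actually prove this statement: it is recorded as an \emph{observation} with no accompanying proof, presumably because the authors regard it as immediate from the fact that $\{1,2\}=\{v_1,v_2\}$ separates the two sides and that $W_3=\{1,2\}$ is forced by the definition of a non-leaping sequence. Your write-up makes these two points explicit---the rerouting through the cut for (i) and (ii), and the forced adjacency $3\sim 1,2$ for (iii)---which is exactly the content the paper is tacitly invoking.
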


\begin{lem}
\label{difflem2}
Let $G_0$ be a connected graph with an edge $e=\{v_1,v_2\}$ and $G\in\CP{q_1,\ldots,q_n}$.  For $x\in V(G_0)\setminus\{v_1,v_2\}$ and $k\in V(G)\setminus\{1,2\}$,
$$\dist_{G_0\oplus_eG}(x,k)=\dist_{G_0\oplus_eG}(x,a_k)+1$$
if $k$ is not adjacent to $1$ and $2$ simultaneously.
\end{lem}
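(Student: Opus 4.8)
The plan is to reduce the whole statement to two ingredients: a distance-decomposition formula across the glued edge, and the behaviour of distances inside $G$ already recorded in Lemma~\ref{difflem}. First I would establish the decomposition
\[
\dist_{G_0\oplus_e G}(x,z)=\min_{c\in\{1,2\}}\bigl(\dist_{G_0}(x,c)+\dist_G(c,z)\bigr)
\]
for every $x\in V(G_0)\setminus\{v_1,v_2\}$ and every $z\in V(G)$ (recall the identification $v_1\mapsto 1$, $v_2\mapsto 2$). The point is that $\{1,2\}$ is a vertex cut separating $V(G_0)\setminus\{1,2\}$ from $V(G)\setminus\{1,2\}$, so any shortest $x$--$z$ path contains some $c\in\{1,2\}$; splitting the path at $c$ gives $\dist_{G_0\oplus_e G}(x,z)=\dist_{G_0\oplus_e G}(x,c)+\dist_{G_0\oplus_e G}(c,z)$, and Observation~\ref{attachobs}(i),(ii) rewrite the two halves as $\dist_{G_0}(x,c)$ and $\dist_G(c,z)$, yielding ``$\geq$''; concatenating geodesics gives ``$\leq$''. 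When $z\in\{1,2\}$ the formula still holds, the minimum being attained at $c=z$ by the triangle inequality along the edge $\{1,2\}$.

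Next I would translate the adjacency hypothesis into a statement about $b_k$. Since $b_k\geq 2$ always, it suffices to note that $b_k=2$ forces $k$ to be adjacent to both $1$ and $2$: indeed $a_k<b_k=2$ gives $a_k=1$, whence $W_k=\{1\}\cup[2,k-1]=[1,k-1]$ and $k$ is joined to all of $1,\dots,k-1$. Hence the assumption that $k$ is not adjacent to $1$ and $2$ simultaneously gives $b_k\geq 3$, i.e. $2<b_k$. Now I apply Lemma~\ref{difflem} with $\bd=\calD(\be_k-\be_{a_k})$ at the entries $h=1,2$: because $1<b_k$ always and $2<b_k$ under the hypothesis, both entries equal $1$, which reads
\[
\dist_G(c,k)=\dist_G(c,a_k)+1\qquad(c\in\{1,2\}).
\]

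Finally I would combine the two ingredients. Applying the decomposition to $z=k$ and to $z=a_k$ (note $a_k\in W_{k-1}\subseteq V(G)$, so the formula is available for $a_k$ as well, whether or not $a_k\in\{1,2\}$) and substituting the displayed identity,
\begin{align*}
\dist_{G_0\oplus_e G}(x,k)
&=\min_{c\in\{1,2\}}\bigl(\dist_{G_0}(x,c)+\dist_G(c,k)\bigr)\\
&=1+\min_{c\in\{1,2\}}\bigl(\dist_{G_0}(x,c)+\dist_G(c,a_k)\bigr)
=\dist_{G_0\oplus_e G}(x,a_k)+1,
\end{align*}
as claimed. The main obstacle is the first step: the decomposition formula is where the structure of the gluing genuinely enters, and it must be argued carefully that every geodesic meets the two-vertex cut $\{1,2\}$ and that, after splitting at such a cut vertex, the two halves may be measured inside $G_0$ and inside $G$ separately via Observation~\ref{attachobs}. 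Once that is in place the remainder is purely formal; the only bookkeeping point is that $a_k$ may equal $1$ or $2$, which is precisely why I state the decomposition for all $z\in V(G)$ rather than only for $z\notin\{1,2\}$.
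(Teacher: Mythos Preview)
Your proof is correct, but it takes a different route from the paper's. The paper argues directly with a shortest $x$--$k$ path in $G_0\oplus_e G$: looking at the last two vertices $a,b$ before $k$, it uses Lemma~\ref{abclem} (and the hypothesis, to handle the case $a\notin V(G)$) to replace $b$ by $a_k$, thereby producing a shortest path through $a_k$. In effect it reruns the proof of Lemma~\ref{difflem} inside the glued graph.

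Your argument is more modular: you first prove the cut-decomposition
\[
\dist_{G_0\oplus_e G}(x,z)=\min_{c\in\{1,2\}}\bigl(\dist_{G_0}(x,c)+\dist_G(c,z)\bigr),
\]
and then feed in Lemma~\ref{difflem} at $h=1,2$ after observing that the hypothesis forces $b_k\geq 3$. This avoids repeating the path-surgery of Lemma~\ref{difflem} and isolates the only new content (the cut formula) cleanly; the same formula would also streamline the proof of Lemma~\ref{difflem3}. The paper's approach, by contrast, is self-contained at the level of Lemma~\ref{abclem} and does not need Observation~\ref{attachobs} or Lemma~\ref{difflem} as black boxes.
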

\begin{proof}
Pick a shortest path from $x$ to $k$.
Let $b$ be the last vertex on the path before reaching $k$ and $a$ the previous vertex.  Since $k\geq 3$, $b\in W_k\subseteq [a_k,k-1]$.  If $b=a_k$, then we found a path from $a$ to $b$ through $a_k$, so assume $b>a_k$.  If $a\notin V(G)$, then $b\in\{1,2\}$; by the assumption that $k$ is not adjacent to both $1$ and $2$, the only neighbor of $k$ in $\{1,2\}$ is $a_k=b$, a contradiction.
Suppose $a\in V(G)$ and $b>a_k$.  Either $a<a_k<b$ with $\{a,b\}\in E(G)$ or $a_k<a<k$ with $\{a_k,k\}\in E(G)$.  By Lemma~\ref{abclem}, $a$ is adjacent to $a_k$, and we may replace the vertex $b$ by $a_k$ and then $a_k$ is adjacent to $k$.  Therefore, every vertex $x\in V(G_0)\setminus\{v_1,v_2\}$ has a shortest path from $x$ to $k$ through $a_k$, so
\[\dist_{G_0\oplus_eG}(x,k)=\dist_{G_0\oplus_eG}(x,a_k)+1.\]
This completes the proof.
\end{proof}

\begin{lem}
\label{difflem3}
Let $G_0$ be a connected graph with an edge $e=\{v_1,v_2\}$ and $G\in\CP{q_1,\ldots,q_n}$.  For $x\in V(G_0)\setminus\{v_1,v_2\}$ and $k\in V(G)\setminus\{1,2\}$,
\[\dist_{G_0\oplus_eG}(x,k)=\dist_{G_0\oplus_eG}(x,k-1)\]
if $k\geq 4$ is adjacent to $1$ and $2$.
\end{lem}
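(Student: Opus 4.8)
The plan is to show that, under the hypothesis, both $\dist_{G_0\oplus_eG}(x,k)$ and $\dist_{G_0\oplus_eG}(x,k-1)$ equal one plus the $G_0$-distance from $x$ to the nearer endpoint of $e$, so that in particular they coincide. The argument has two essentially independent ingredients: a purely structural claim inside $G$, namely that $k-1$ is also adjacent to both $1$ and $2$; and a separator argument across the glued edge $\{v_1,v_2\}=\{1,2\}$ that lets me evaluate distances from $x$ to high-indexed vertices of $G$.

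First I would establish the structural claim. Since $k$ is adjacent to both $1$ and $2$, we have $\{1,2\}\subseteq W_k$. Because $b_k\geq 2$ always (as $q_k\leq k-1$) and $a_k<b_k$, the element $1$ cannot lie in $[b_k,k-1]$, so $1=a_k$; then $2\in W_k=\{1\}\cup[b_k,k-1]$ forces $b_k\leq 2$, hence $b_k=2$ and $W_k=[1,k-1]$. Now Observation~\ref{firstobs} gives $[1,k-2]=W_k\setminus\{k-1\}\subseteq W_{k-1}\subseteq[1,k-2]$, so $W_{k-1}=[1,k-2]$. Because $k\geq 4$ we have $k-2\geq 2$, hence $\{1,2\}\subseteq W_{k-1}$; that is, $k-1$ is adjacent to both $1$ and $2$ as well. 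This is the only place where the CP structure and the hypothesis $k\geq 4$ enter, and I expect it to be the crux of the argument.

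Next I would set up the distance decomposition. Writing $A=V(G_0)\setminus\{v_1,v_2\}$ and $B=V(G)\setminus\{1,2\}$, every edge of $G_0\oplus_eG$ lies entirely in $V(G_0)$ or entirely in $V(G)$, so no edge joins $A$ to $B$, and the pair $\{1,2\}$ separates $A$ from $B$. Consequently, for $x\in A$ and any $v\in B$, a shortest $x$--$v$ path meets $\{1,2\}$; combining this observation with the triangle inequality yields the vertex-cut identity $\dist_{G_0\oplus_eG}(x,v)=\min_{w\in\{1,2\}}\bigl[\dist_{G_0\oplus_eG}(x,w)+\dist_{G_0\oplus_eG}(w,v)\bigr]$. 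Applying Observation~\ref{attachobs}(i) to the first summand and Observation~\ref{attachobs}(ii) to the second gives $\dist_{G_0\oplus_eG}(x,v)=\min_{w\in\{1,2\}}\bigl[\dist_{G_0}(x,w)+\dist_G(w,v)\bigr]$.

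Finally I would evaluate this identity for $v=k$ and for $v=k-1$. Both $k$ and $k-1$ are adjacent to $1$ and $2$ in $G$, so $\dist_G(w,k)=\dist_G(w,k-1)=1$ for each $w\in\{1,2\}$. Hence both $\dist_{G_0\oplus_eG}(x,k)$ and $\dist_{G_0\oplus_eG}(x,k-1)$ equal $1+\min\{\dist_{G_0}(x,1),\dist_{G_0}(x,2)\}$, and the two are equal, as claimed. The routine parts are the separator decomposition and the final substitution; the step that requires genuine care is the deduction $W_{k-1}=[1,k-2]$ from the hypothesis, which I would double-check against the boundary case $k=4$, where $W_3=\{1,2\}$ confirms that $k-1=3$ is indeed adjacent to both $1$ and $2$.
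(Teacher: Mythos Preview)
Your proof is correct and follows essentially the same approach as the paper: both deduce from the hypothesis that $k-1$ is adjacent to $1$ and $2$ (the paper actually shows the stronger fact that $\{1,\ldots,k\}$ is a clique, but only uses your weaker consequence), and then use the separator $\{1,2\}$ to conclude that $\dist_{G_0\oplus_eG}(x,k)=\dist_{G_0\oplus_eG}(x,k-1)=1+\min\{\dist_{G_0}(x,1),\dist_{G_0}(x,2)\}$. Your explicit separator/vertex-cut justification is a nice touch that the paper leaves implicit.
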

\begin{proof}
Since $k$ is adjacent to $1$, $a_3=\cdots =a_k=1$.  With this assumption, $b_3=\cdots=b_k=2$ since $k$ is adjacent to $2$.  Therefore, $\{1,2,\ldots,k\}$ forms a clique from the definition of CP graphs, and
\[\begin{aligned}
 &\mathrel{\phantom{=}} \dist_{G_0\oplus_eG}(x,k)=\dist_{G_0\oplus_eG}(x,k-1) \\
 &=\min\{\dist_{G_0\oplus_eG}(x,1),\dist_{G_0\oplus_eG}(x,2)\}+1
\end{aligned}\]
whenever $k-1\geq 3$.
\end{proof}

\begin{thm}
\label{thm:attach}
Let $s=q_1,\ldots,q_n$ be a non-leaping sequence.
If $G_0$ is a connected graph with an edge $e=\{v_1,v_2\}$ and $G\in\CP{s}$, then $\detD(G_0\oplus_eG)$ is independent of the choice of $G$.
\end{thm}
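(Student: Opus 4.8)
\section*{Proof proposal}

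The plan is to mimic the congruence strategy of Theorem~\ref{finalthm}, applied now to the full graph $G_0\oplus_e G$. Order the vertices of $G_0\oplus_e G$ as $V(G_0)$ first (so that $1=v_1$ and $2=v_2$ are among them) followed by the attached vertices $3,\ldots,n$, and let $\calD$ denote the resulting distance matrix. Define the \emph{augmented reducing matrix} $\tilde E$ to be the identity on the $V(G_0)$-coordinates and to have $k$-th column $\be_k-\be_{a_k}-\be_{k-1}+\be_{a_{k-1}}$ for each attached vertex $k\geq 3$, exactly as in the reducing matrix of $G$. Each such column has its pivot $1$ in row $k$ and all other nonzero entries in rows $a_k,a_{k-1},k-1$, which all precede $k$ in this ordering; hence $\tilde E$ is upper triangular with unit diagonal, $\det(\tilde E)=1$, and $\detD(G_0\oplus_e G)=\det(\tilde E\trans\calD\tilde E)$. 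It therefore suffices to show that $R:=\tilde E\trans\calD\tilde E$ depends only on $s$ and on the fixed data $(G_0,e)$, not on the choice of $G\in\CP{s}$.

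I would read off the blocks of $R$ via $R_{u,v}=(\tilde E\be_u)\trans\calD(\tilde E\be_v)$. For $x,y\in V(G_0)$ we have $\tilde E\be_x=\be_x$, so $R_{x,y}=\dist_{G_0}(x,y)$ by Observation~\ref{attachobs}(i); the $V(G_0)\times V(G_0)$ block is just $\calD(G_0)$. For attached $i,j\geq 3$ the vectors $\tilde E\be_i,\tilde E\be_j$ are supported on $\{1,\ldots,n\}$, where $\calD$ agrees with the distance matrix of $G$ by Observation~\ref{attachobs}(ii); hence this block coincides with the corresponding entries of $E\trans\calD(G)E$, which by Theorem~\ref{finalthm} is the submatrix of the weighted adjacency matrix of the reduced graph on $\{3,\ldots,n\}$, determined by $s$. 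The same computation shows the rows $1,2$ of the cross block reproduce reduced-graph entries, again determined by $s$.

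The crux is the cross block in the rows $x\in V(G_0)\setminus\{1,2\}$, where $\tilde E\be_x=\be_x$ gives $R_{x,k}=g_k-g_{k-1}$ for $k\geq 3$, with $g_k:=\dist(x,k)-\dist(x,a_k)$ and distances taken in $G_0\oplus_e G$. The whole point is that each $g_k$ is independent of the choice of $G$, even though the individual distances $\dist(x,k)$ are not. Indeed, Lemma~\ref{difflem2} gives $g_k=1$ whenever $k$ is not adjacent to both $1$ and $2$; Lemma~\ref{difflem3} gives $a_k=1$ and $\dist(x,k)=\min\{\dist(x,1),\dist(x,2)\}+1$ when $k\geq 4$ is adjacent to both, so that $g_k=\min\{\dist(x,1),\dist(x,2)\}+1-\dist(x,1)$; and for the single remaining index $k=3$ one has $g_3=\dist(x,3)-\dist(x,1)$, which is independent of $G$ by Observation~\ref{attachobs}(iii) together with Observation~\ref{attachobs}(i). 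In every case $g_k$ is expressed through $G_0$-internal distances and the $G$-invariant quantity $\dist(x,3)$, so $R_{x,k}=g_k-g_{k-1}$ depends only on $(G_0,e)$ and $s$. (One checks further that these differences vanish except at $k=3$ and at the lone index where the initial clique $\{k:q_k=k-1\}$ ends, but this is not needed.)

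Assembling the three blocks shows that $R$ is independent of the choice of $G\in\CP{s}$, whence so is $\detD(G_0\oplus_e G)=\det(R)$. The main obstacle is the cross block just discussed: the separator $\{1,2\}$ forces $\dist(x,k)$ to depend on $G$ through the genuinely $G$-dependent distances $\dist_G(1,k)$ and $\dist_G(2,k)$, and only the particular combinations $g_k$ prescribed by Lemmas~\ref{difflem2} and \ref{difflem3} cancel this dependence. The vertex $k=3$ is the exceptional index covered by neither lemma, and it is precisely here that Observation~\ref{attachobs}(iii) becomes indispensable.
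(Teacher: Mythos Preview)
Your proposal is correct and follows essentially the same approach as the paper: conjugate the distance matrix by an augmented reducing matrix that is the identity on $V(G_0)$ and the usual reducing columns on the attached vertices, then verify block by block that the result depends only on $(G_0,e)$ and $s$, with the cross block handled via Lemmas~\ref{difflem2}, \ref{difflem3} and Observation~\ref{attachobs}. The paper partitions the vertices as $V(G_0)\setminus\{v_1,v_2\}$ versus $V(G)$ rather than $V(G_0)$ versus $\{3,\ldots,n\}$, and computes $A_{12}E\be_k$ directly instead of via your telescoping $g_k-g_{k-1}$, but the content is the same; one point worth making explicit is that the case split ``$k$ adjacent to both $1$ and $2$'' is itself determined by $s$ (it holds iff $q_k=k-1$), which is what lets the casewise formula for $g_k$ be $G$-independent.
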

\begin{proof}
By Observation~\ref{attachobs}, we may write $\calD(G_0\oplus_eG)$ as the form
\[
\begin{bmatrix}
\calD(G_0-\{v_1,v_2\}) & A_{12} \\
A_{12}\trans  &  \calD(G)
\end{bmatrix}.
\]
Let
\[
F=\begin{bmatrix}
I_{|V(G_0)|-2} & O \\
O & E
\end{bmatrix},
\]
where $E$ is the reducing matrix corresponding to $G$. Then
\[
F\trans\calD(G_0\oplus_eG)F=
\begin{bmatrix}
\calD(G_0-\{v_1,v_2\}) & A_{12}E \\
E\trans A_{12}\trans & E\trans\calD(G)E
\end{bmatrix}.
\]
We know that $\calD(G_0-\{v_1,v_2\})$ is independent of the choice of $G$, and so is $E^T\calD(G)E$ by Theorem \ref{finalthm}. It remains to show that $A_{12}E$ is independent of the choice of $G$.
Let $t\geq 4$ be the first vertex such that $q_t\ne t-1$. Then $t$ is not adjacent to $1$ and $2$ simultaneously, so is each $k$ for $k\geq t$ since $W_k-\{k-1\}\subseteq W_{k-1}$.
By Lemma \ref{difflem2},
\[\begin{aligned}
A_{12}E\be_k & =A_{12}(\be_k-\be_{a_k}-\be_{k-1}+\be_{a_{k-1}}) \\
 &=A_{12}(\be_k-\be_{a_k})-A_{12}(\be_{k-1}-\be_{a_{k-1}}) =\bone-\bone=\bzero
\end{aligned}\]
 if $k\geq t+1$.
When $k=t$,
\[\begin{aligned}
A_{12}E\be_k & =A_{12}(\be_k-\be_{a_k}-\be_{k-1}+\be_{a_{k-1}}) \\
 &=A_{12}(\be_k-\be_{a_k})-A_{12}(\be_{k-1}-\be_{a_{k-1}}) \\
 &=\bone- A_{12}(\be_{k-1}-\be_{a_{k-1}}) \\
 &=\bone- A_{12}(\be_{3}-\be_{1}),\
\end{aligned}\]
since $A_{12}\be_{k-1}=A_{12}\be_3$ by Lemma~\ref{difflem3} and $a_{k-1}=1$.

When $k\in [4,t-1]$, $a_k=a_{k-1}=1$ and $k$ is adjacent to both $1$ and $2$, so
\[\begin{aligned}
A_{12}E\be_k & =A_{12}(\be_k-\be_{a_k}-\be_{k-1}+\be_{a_{k-1}}) \\
 &=A_{12}(\be_k-\be_{k-1})=\bzero
\end{aligned}\]
by Lemma~\ref{difflem3}.  For $k=3$, $A_{12}E\be_k=A_{12}(\be_3-\be_2)$.  Therefore,
\[A_{12}E=
\begin{bmatrix}
A_{12}\be_1 & A_{12}\be_2 & A_{12}(\be_3-\be_2) & O & \bone-A_{12}(\be_{3}-\be_{1}) & O
\end{bmatrix},\]
where the first $O$ is a zero matrix with $t-4$ columns and the second $O$ has $|V(G)|-t$ columns.
Note that $A_{12}\be_1$ and $A_{12}\be_2$ are determined by $\calD(G_0)$, which is independent of the choice of $G$.  Also, $A_{12}\be_3$ is independent of the choice of $G$ by Observation~\ref{attachobs}.
And $t$ is determined only by $s$.
In conclusion, $\detD(G_0\oplus_e G)$ is independent of the choice of $G\in\CP{s}$.
\end{proof}

Let $e_1,\ldots, e_k$ be edges of a connected graph $G_0$.  Let $G_1,\ldots,G_k$ be CP graphs (not necessarily from the same non-leaping sequence).  Define $G_0\oplus_{e_1}G_1\oplus_{e_2}\cdots\oplus_{e_k}G_k$ as  $((G_0\oplus_{e_1}G_1)\oplus_{e_2}\cdots)\oplus_{e_k}G_k$. Note that
\[G_0\oplus_{e_1}G_1\oplus_{e_2}\cdots\oplus_{e_k}G_k=G_0\oplus_{e_{\pi_1}}G_{\pi_1}\oplus_{e_{\pi_2}}\cdots\oplus_{e_{\pi_k}}G_{\pi_k}\]
for any permutation $\pi=(\pi_1,\pi_2,\ldots,\pi_k)$ of $(1,2,\ldots,k)$.

\begin{cor}
\label{cor:attach}
Let $G_0$ be a connected graph.  For $1\leq i\leq k$, let $s^{(i)}$ be a non-leaping sequence, $e_i=\{v_1^{(i)},v_2^{(i)}\}$ an edge of $G_0$, and $G_i\in\CP{s^{(i)}}$.
Then $\detD(G_0\oplus_{e_1}G_1\oplus_{e_2}\cdots\oplus_{e_k}G_k)$ is independent of the choice of $G_1,\ldots,G_k$.
\end{cor}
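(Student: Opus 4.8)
The plan is to reduce everything to the single-attachment result Theorem~\ref{thm:attach}, using the permutation invariance of the iterated attachment that is recorded just before the corollary. I would view
\[\detD(G_0\oplus_{e_1}G_1\oplus_{e_2}\cdots\oplus_{e_k}G_k)\]
as a function of the tuple $(G_1,\ldots,G_k)$ ranging over the product $\CP{s^{(1)}}\times\cdots\times\CP{s^{(k)}}$, and show that this function is constant in each coordinate separately. Since a function on a product set that is constant in every coordinate (with the others held fixed) is globally constant---one passes from any tuple to any other by changing one entry at a time---this is enough to conclude independence of all of $G_1,\ldots,G_k$ simultaneously.

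Fix an index $i$ and fix $G_j$ for every $j\neq i$. By the permutation invariance of $\oplus$, I may reorder the attachments so that $G_i$ is attached last, writing
\[G_0\oplus_{e_1}G_1\oplus_{e_2}\cdots\oplus_{e_k}G_k=\widetilde G_0\oplus_{e_i}G_i,\]
where $\widetilde G_0$ denotes the graph obtained by attaching all the CP graphs $G_j$ with $j\neq i$ to $G_0$ along the corresponding edges $e_j$. The next step is to verify that $\widetilde G_0$ meets the hypotheses of Theorem~\ref{thm:attach}: it is connected, because $G_0$ is connected and each CP graph is connected and is glued along a shared edge; and it still contains $e_i$ as an edge, because the operation $\oplus_{e}$ only identifies the edge $\{1,2\}$ of a CP graph with $e$ and never deletes any edge of $G_0$, so the endpoints $v_1^{(i)},v_2^{(i)}\in V(G_0)\subseteq V(\widetilde G_0)$ and the edge $e_i\in E(G_0)$ persists in $\widetilde G_0$. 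Theorem~\ref{thm:attach} then applies with base graph $\widetilde G_0$ and attached CP graph $G_i$, giving that $\detD(\widetilde G_0\oplus_{e_i}G_i)$ is independent of the choice of $G_i\in\CP{s^{(i)}}$. As $i$ was arbitrary, the determinant is constant in each coordinate, and the argument of the first paragraph finishes the proof.

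The only point requiring care is the bookkeeping in the permutation step when the edges $e_1,\ldots,e_k$ are not all distinct: if $e_i=e_j$ for some $j\neq i$, one must confirm that $e_i$ survives the attachment of $G_j$, which it does since it is identified with the edge $\{1,2\}$ of $G_j$ and that edge remains present in the glued graph. Apart from this, the reduction is routine, and I expect no substantive obstacle beyond checking that every intermediate graph satisfies the connectivity-and-edge hypothesis of Theorem~\ref{thm:attach}, which is immediate from the definition of $\oplus_e$ and Observation~\ref{attachobs}. The same conclusion can alternatively be packaged as an induction on $k$: peel off the last attachment with Theorem~\ref{thm:attach} to eliminate dependence on $G_k$, then reorder by permutation invariance so that the now-fixed $G_k$ becomes part of a new base graph and apply the inductive hypothesis to the remaining $k-1$ attachments.
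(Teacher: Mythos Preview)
Your proposal is correct and follows essentially the same approach as the paper: both use the permutation invariance of the iterated attachment to bring one $G_i$ to the outermost position, apply Theorem~\ref{thm:attach} to swap it for any other $G_i'\in\CP{s^{(i)}}$, and then iterate to replace all the $G_i$ one at a time. Your version is in fact slightly more careful than the paper's in explicitly verifying that the intermediate base graph $\widetilde G_0$ is connected and still contains $e_i$ (including the case of repeated edges), which the paper leaves implicit.
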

\begin{proof}
Let $G_i,G'_i\in\CP{s^{(i)}}$ for $1\leq i\leq k$. Then
\[
\begin{aligned}
& \mathrel{\phantom{=}}\detD((G_0\oplus_{e_2}G_2\oplus_{e_3}\cdots\oplus_{e_k}G_k)\oplus_{e_1}G_1) \\
 & =\detD((G_0\oplus_{e_2}G_2\oplus_{e_3}\cdots\oplus_{e_k}G_k)\oplus_{e_1}G'_1)
\end{aligned}
\]
by Theorem \ref{thm:attach}. So
\[
\begin{aligned}
& \mathrel{\phantom{=}}\detD(G_0\oplus_{e_1}G_1\oplus_{e_2}\cdots\oplus_{e_k}G_k) \\
 &=
\detD((G_0\oplus_{e_2}G_2\oplus_{e_3}\cdots\oplus_{e_k}G_k)\oplus_{e_1}G_1) \\
 &= \detD((G_0\oplus_{e_2}G_2\oplus_{e_3}\cdots\oplus_{e_k}G_k)\oplus_{e_1}G'_1) \\
 &=
\detD(G_0\oplus_{e_1}G'_1\oplus_{e_2}G_2\oplus\cdots\oplus_{e_k}G_k).
\end{aligned}
\]
Inductively,
\[\begin{aligned}
& \mathrel{\phantom{=}}\detD(G_0\oplus_{e_1}G_1\oplus_{e_2}G_2\oplus\cdots\oplus_{e_k}G_k) \\
 &=
\detD(G_0\oplus_{e_1}G'_1\oplus_{e_2}G_2\oplus\cdots\oplus_{e_k}G_k) \\
 &=
\detD(G_0\oplus_{e_1}G'_1\oplus_{e_2}G'_2\oplus\cdots\oplus_{e_k}G_k) \\
 &= \cdots \\
 &=
\detD(G_0\oplus_{e_1}G'_1\oplus_{e_2}G'_2\oplus\cdots\oplus_{e_k}G'_k).
\end{aligned}\]
This completes the proof.
\end{proof}

\begin{figure}[h]
\begin{center}
\begin{tikzpicture}
\node[label={above:$1$}] (1) at (0,0) {};
\node[label={below:$2$}] (2) at (240:1) {};
\node[label={below:$3$},xshift=1cm] (3) at (240:1) {};
\node[label={above:}] (4) at (1,0) {};
\node[label={above:}] (6) at (2,0) {};
\node[label={above:}] (11) at (-1,0) {};
\node[label={above:}] (13) at (-2,0) {};
\node[label={above:}] (15) at (-3,0) {};
\node[label={below:},xshift=2cm] (5) at (240:1) {};
\node[label={below:},xshift=3cm] (7) at (240:1) {};
\node[label={below:},xshift=-1cm] (12) at (240:1) {};
\node[label={below:},xshift=-2cm] (14) at (240:1) {};
\node[label={below:}] (9) at (300:2) {};
\node[label={below:},xshift=-1cm] (8) at (300:2) {};
\node[label={below:},xshift=-1cm] (10) at (300:3) {};
\draw (1) -- (2) -- (3) -- (5) -- (7) -- (6) -- (4) -- (1);
\draw (1) -- (3) -- (4) -- (5) -- (6);
\draw (1) -- (11) -- (13) -- (15) -- (14) -- (12) -- (2) -- (11) -- (12) -- (13) -- (14);
\draw (2) -- (8) -- (10) -- (9) -- (3) -- (8) -- (9);
\node[rectangle,draw=none] at (0,-3) {$G_1$};
\end{tikzpicture}
\hfil
\begin{tikzpicture}
\node[label={above:$1$}] (1) at (0,0) {};
\node[label={below:$2$}] (2) at (240:1) {};
\node[label={below:$3$},xshift=1cm] (3) at (240:1) {};
\node[label={above:}] (11) at (-1,0) {};
\node[label={above:}] (13) at (-2,0) {};
\node[label={below:},xshift=-1cm] (15) at (240:2) {};
\node[label={below:},xshift=-1cm] (12) at (240:1) {};
\node[label={below:},xshift=-2cm] (14) at (240:1) {};
\node[label={below:}] (9) at (300:2) {};
\node[label={below:},xshift=-1cm] (8) at (300:2) {};
\node[label={below:},xshift=1cm] (10) at (300:1) {};
\foreach \v/\ang in {4/0,5/60,6/120,7/150}{
\node[label={\ang:}] (\v) at (\ang:1) {};
\draw (1) -- (\v);
}
\draw (1) -- (11) -- (13) -- (14) -- (15) -- (12) -- (2) -- (8) -- (9) -- (10) -- (3) -- (4) -- (5) -- (6) -- (7);
\draw (13) -- (12) -- (14);
\draw (2) -- (3) -- (1) -- (2) -- (11) -- (12);
\draw (8) -- (3) -- (9);
\node[rectangle,draw=none] at (0,-3) {$G_2$};
\end{tikzpicture}
\end{center}
\caption{Graphs obtained from $K_3$ by attaching three linear $2$-tree}
\label{superG1G2}
\end{figure}
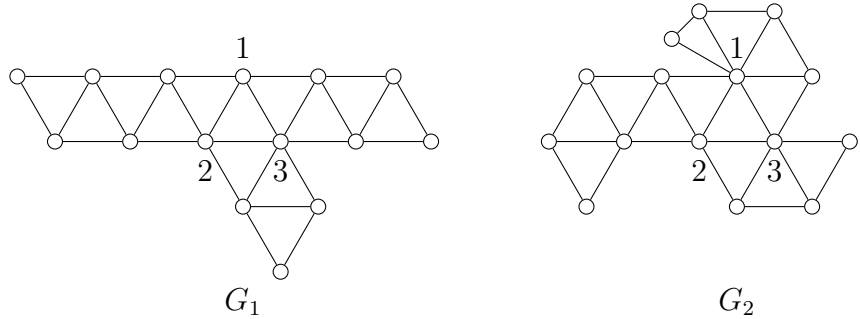

\begin{exam}
Both $G_1$ and $G_2$ in Figure~\ref{superG1G2} are obtained from $K_3$ induced on vertices $\{1,2,3\}$ by attaching three linear $2$-trees.  By Corollary~\ref{cor:attach}, $\detD(G_1)=\detD(G_2)$.
\end{exam}

\section{The $2$-clique paths}
\label{sec:twoCP}

Consider the set $[2,b]$ as an increasing sequence $2,\ldots, b$.  A special family of non-leaping sequences are of the form
\[0,1,[2,p_1-1],[2,p_2-1],\ldots,[2,p_m-1],\]
where $p_1,\ldots,p_m$ are integers at least $3$.
Such a sequence is abbreviated as $2:p_1,\ldots,p_m$.  It is possible that $m=0$, in which case the sequence is $0,1$ and $\CP{0,1}=\{K_2\}$.

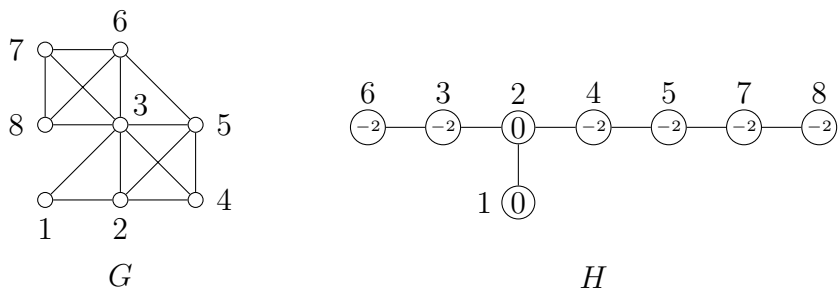
\begin{figure}[h]
\begin{center}
\begin{tikzpicture}
\node[label={below:$1$}] (1) at (0,0) {};
\node[label={below:$2$}] (2) at (1,0) {};
\node[label={45:$3$}] (3) at (1,1) {};
\node[label={right:$4$}] (4) at (2,0) {};
\node[label={right:$5$}] (5) at (2,1) {};
\node[label={above:$6$}] (6) at (1,2) {};
\node[label={left:$7$}] (7) at (0,2) {};
\node[label={left:$8$}] (8) at (0,1) {};
\draw (1) -- (2) -- (3) -- (1);
\draw (4) -- (2) -- (5) -- (3) -- (4) -- (5) -- (6) -- (3);
\draw (7) -- (6) -- (8) -- (3) -- (7) -- (8);
\node[rectangle,draw=none] at (1,-1) {$G$};
\end{tikzpicture}
\hfil
\begin{tikzpicture}[every node/.append style={inner sep=1pt}]
\node[label={left:$1$}] (1) at (0,0) {$0$};
\node[label={above:$2$}] (2) at (0,1) {$0$};
\node[label={above:$3$}] (3) at (-1,1) {\tiny $-2$};
\node[label={above:$4$}] (4) at (1,1) {\tiny $-2$};
\node[label={above:$5$}] (5) at (2,1) {\tiny $-2$};
\node[label={above:$6$}] (6) at (-2,1) {\tiny $-2$};
\node[label={above:$7$}] (7) at (3,1) {\tiny $-2$};
\node[label={above:$8$}] (8) at (4,1) {\tiny $-2$};
\draw (6) -- (3) -- (2) -- (4) -- (5) -- (7) -- (8);
\draw (1) -- (2);
\node[rectangle,draw=none] at (1,-1) {$H$};
\end{tikzpicture}
\end{center}
\caption{A graph $G$ in $\CP{2:3,4,3,4}$ and its reduced graph $H$}
\label{seesawex}
\end{figure}

\begin{exam}
Let $p_1,p_2,p_3,p_4=3,4,3,4$.  The sequence $s=2:3,4,3,4$ stands for
\[0,1,2,2,3,2,2,3.\]
Let this sequence be $q_1,\ldots,q_{8}$.  Figure~\ref{seesawex} shows a graph $G$ from $\CP{s}$ and its reduced subgraph.  The graph is obtained from a disjoint union of four cliques $K_{p_1}$, $K_{p_2}$, $K_{p_3}$, and $K_{p_4}$ by gluing an edge from each of two consecutive cliques.

By Remark~\ref{rem:reduced}, the reduced graph is a weighted graph on $8$ vertices such that
\begin{itemize}
\item vertices $1$ and $2$ are adjacent and with weight $0$;
\item for $k=3,\ldots,8$, vertex $k$ is adjacent to $k-1$ if $q_k=q_{k-1}+1$, and is adjacent to $b_{k-1}$ if $q_k=2$;
\item each edge has weight $1$, and every vertex except for $1$ and $2$ has weight $-2$.
\end{itemize}
Therefore, on the reduced graph $H$, vertices $3$ and $6$ form a path of length $(p_1-2)+(p_3-2)$, and vertices $4,5,7,8$ form another path of length $(p_2-2)+(p_4-2)$.
\end{exam}

Any graph from $\CP{2:p_1,\ldots,p_m}$ is obtained from a disjoint union of $K_{p_1}$, $K_{p_2}$, $\ldots$, $K_{p_m}$ by gluing an edge of $K_{p_i}$ to an edge of $K_{p_{i+1}}$ while an edge cannot be glued twice.  Graphs in $\CP{2:p_1,\ldots,p_m}$ are called \emph{$2$-clique paths}.  When $p_1=p_2=\cdots =p_m=3$, it is the family of linear $2$-trees on $m+2$ vertices.

\begin{defn}
Define $P^{-2}_n$ as a weighted path on $n$ vertices such that each vertex has weight $-2$ and each edge has weight $1$.  The \emph{seesaw graph} $S_{\ell,r}$ is a weighted graph on $n=2+\ell+r$ vertices constructed by the following process:
\begin{itemize}
\item Start with two adjacent vertices $1$ and $2$, each with weight $0$;
\item join an endpoint of $P^{-2}_\ell$ to vertex $2$ by an edge;
\item join an endpoint of $P^{-2}_r$ to vertex $2$ by an edge; and
\item every edge has weight $1$.
\end{itemize}
\end{defn}

\begin{obs}
\label{obs:seesaw}
Let $2:p_1,\ldots,p_m$ be a non-leaping sequence.  Let
\[\ell=\sum_{k\text{ odd}}(p_k-2) \text{ and }r=\sum_{k\text{ even}}(p_k-2).\]
Then the reduced graph of $2:p_1,\ldots,p_m$ is isomorphic to the seesaw graph $S_{\ell,r}$.
\end{obs}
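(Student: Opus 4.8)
The plan is to decode the abbreviated sequence $2:p_1,\ldots,p_m$ into consecutive blocks, read off the edges of the reduced graph from Remark~\ref{rem:reduced}, and then show that these edges split into two paths hanging off vertex~$2$, sorted by the parity of the block index.

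First I would fix notation for the blocks. Writing $n_0=0$ and $n_i=\sum_{j=1}^i (p_j-2)$, the segment $[2,p_i-1]$ of the sequence occupies the positions $k\in[n_{i-1}+3,\,n_i+2]$; call this set of vertices \emph{block $i$}. Since $p_i\geq 3$, every block is nonempty. The central computation is a direct substitution into $b_k=k-q_k+1$: if $k$ lies in block $i$ and is its $j$-th vertex (so $q_k=j+1$), then $b_k=(n_{i-1}+2+j)-(j+1)+1=n_{i-1}+2$, a value constant along the whole block. I would record this, together with the two shapes of the sequence inside a block: at the first vertex of each block $q_k=2$, while at every later vertex $q_k=q_{k-1}+1$.

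Next I would use Remark~\ref{rem:reduced} to list the surviving edges. Inside block $i$ the relation $q_k=q_{k-1}+1$ makes $k$ adjacent to $k-1$ alone, so each block is itself an induced path $n_{i-1}+3,\ldots,n_i+2$. At the first vertex $k=n_{i-1}+3$ of block $i$ the relation $q_k=2$ makes $k$ adjacent to $b_{k-1}$; here $k-1=n_{i-1}+2$ is the last vertex of block $i-1$, so the constant value from the previous step gives $b_{k-1}=n_{i-2}+2$, which is exactly the last vertex of block $i-2$. Blocks $1$ and $2$ are the base cases: their first vertices are $3$ and $n_1+3$, which attach respectively to $b_2=2$ and to $b_{n_1+2}=n_0+2=2$, so both hook onto vertex~$2$. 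I expect this bookkeeping step — turning the single number $b_{k-1}$ into the statement that block $i$ attaches to the tail of block $i-2$ — to be the main obstacle, since it is what produces the parity splitting.

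With the edges in hand the conclusion is immediate. The odd-indexed blocks concatenate into one path growing out of vertex~$2$ (block~$1$, then block~$3$ whose start meets the end of block~$1$, then block~$5$, and so on), and the even-indexed blocks concatenate into a second such path; meanwhile vertices $1,2$ carry only the edge $\{1,2\}$. The first path contains $\sum_{k\text{ odd}}(p_k-2)=\ell$ vertices and the second contains $\sum_{k\text{ even}}(p_k-2)=r$ vertices. Finally Definition~\ref{def:reduced} assigns weight $0$ to vertices $1,2$, weight $-2$ to all others, and weight $1$ to every surviving edge, which is precisely the weighting of the seesaw graph $S_{\ell,r}$ with its two arms $P^{-2}_\ell$ and $P^{-2}_r$ joined at vertex~$2$. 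Matching vertices along the two paths then gives the desired isomorphism.
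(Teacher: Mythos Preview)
Your proof is correct and follows exactly the line of reasoning the paper sketches in Example~4.2 immediately before the Observation: classify each $k\geq 3$ by whether $q_k=2$ or $q_k=q_{k-1}+1$, use Remark~\ref{rem:reduced} to read off the unique backward neighbor, and observe that the first vertex of block~$i$ attaches to the last vertex of block~$i-2$ (or to vertex~$2$ when $i\leq 2$). The paper leaves this as an observation without a formal proof, so your write-up is a careful expansion of that example; your computation that $b_k=n_{i-1}+2$ is constant along block~$i$ is the clean way to make the ``block~$i$ attaches to block~$i-2$'' step rigorous.
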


\begin{lem}
\label{wpath}
Let $A(P^{-2}_n)$ be the weighted adjacency matrix of the $P^{-2}_n$.  Then $\det(A(P^{-2}_n))=(-1)^n(n+1)$ and $\inertia(A(P^{-2}_n))=(0,n,0)$.
\end{lem}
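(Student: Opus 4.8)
The plan is to recognize $A(P^{-2}_n)$ as the $n\times n$ symmetric tridiagonal matrix with every diagonal entry equal to $-2$ and every entry on the first super- and sub-diagonal equal to $1$, and then to handle the determinant and the inertia in turn. For the determinant, write $d_n=\det(A(P^{-2}_n))$ and expand along the last row. The bottom-right entry is $-2$ and the only other nonzero entry in that row is a single $1$, so the standard tridiagonal cofactor expansion yields the recurrence
\[
d_n=-2\,d_{n-1}-d_{n-2}\qquad(n\geq 2),
\]
with initial data $d_0=1$ (the empty determinant) and $d_1=-2$. I would then prove $d_n=(-1)^n(n+1)$ by induction: assuming the formula for $d_{n-1}$ and $d_{n-2}$, the recurrence gives
\[
d_n=-2(-1)^{n-1}n-(-1)^{n-2}(n-1)=(-1)^n\bigl(2n-(n-1)\bigr)=(-1)^n(n+1),
\]
which also matches the two base cases.

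For the inertia, the key observation is that the computation above simultaneously evaluates \emph{every} leading principal minor: the $k\times k$ leading principal submatrix of $A(P^{-2}_n)$ is precisely $A(P^{-2}_k)$, so its determinant is $\Delta_k=(-1)^k(k+1)$. In particular all $\Delta_k$ are nonzero, and consecutive minors $\Delta_{k-1},\Delta_k$ have opposite signs. By the Jacobi/Sylvester sign-count rule, for a real symmetric matrix with nonvanishing leading principal minors the number of negative eigenvalues equals the number of sign changes in the sequence $\Delta_0,\Delta_1,\ldots,\Delta_n$; here every consecutive pair changes sign, giving $n$ sign changes. Hence $n_-=n$, and therefore $n_+=n_0=0$, i.e. $\inertia(A(P^{-2}_n))=(0,n,0)$.

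As an alternative route to the inertia (useful as a sanity check), one may note that $-A(P^{-2}_n)=2I-M$, where $M$ is the adjacency matrix of the unweighted path $P_n$; its eigenvalues are $2\cos\!\bigl(k\pi/(n+1)\bigr)$ for $k=1,\ldots,n$, all lying in $(-2,2)$. Thus $-A(P^{-2}_n)$ has eigenvalues $2-2\cos\!\bigl(k\pi/(n+1)\bigr)>0$, so $-A(P^{-2}_n)$ is positive definite and $A(P^{-2}_n)$ is negative definite, again yielding $(0,n,0)$.

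There is no substantial obstacle here; the statement is a classical tridiagonal determinant together with a definiteness claim. The only points requiring care are the sign bookkeeping in the inductive step and the correct initial conditions $d_0=1$, $d_1=-2$. Once the leading principal minors are pinned down to $(-1)^k(k+1)$, both halves of the lemma follow at once — the determinant directly, and the inertia from either the sign-change rule or the explicit spectrum of the path adjacency matrix.
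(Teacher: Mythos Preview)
Your determinant argument is essentially identical to the paper's: both set up the tridiagonal recurrence $d_n=-2d_{n-1}-d_{n-2}$ via a single Laplace expansion (you along the last row, the paper along the first) and finish by induction on the same formula.

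For the inertia the approaches diverge. The paper simply observes that every Gershgorin disk of $A(P^{-2}_n)$ is centered at $-2$ with radius at most $2$, so no eigenvalue is positive; since the determinant is nonzero, there are no zero eigenvalues either, and negative definiteness follows in one line. Your primary route instead reuses the determinant formula for all leading principal minors and applies the Jacobi--Sylvester sign-change criterion. This is perfectly correct and has the pleasant feature of extracting the inertia directly from the computation already done; it is also in the spirit of the paper's own later use of Jones's theorem (Lemma~\ref{inertialem}). Your alternative via the explicit eigenvalues $2-2\cos\bigl(k\pi/(n+1)\bigr)$ of $-A(P^{-2}_n)$ is likewise valid and gives strictly more information, at the cost of importing the spectrum of the path. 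The Gershgorin argument is the most self-contained of the three, but all reach the same conclusion without difficulty.
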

\begin{proof}
It is easy to check that
\[\begin{aligned}
\det\begin{bmatrix}-2\end{bmatrix} &= -2=(-1)^1(1+1) \text{ and}\\
\det\begin{bmatrix}-2&1\\1&-2\end{bmatrix} &= 3=(-1)^2(2+1),
\end{aligned}\]
so the statement is true for $n=1,2$.

Assuming the statement is true for small $n$, we will prove by induction.
Suppose the vertices of $P_n^{-2}$ are labeled by $1,2,\ldots, n$ by the path order.  Then the first row of $A(P_n^{-2})$ has only two nonzero entries, namely, $[A(P_n^{-2})]_{1,1}=-2$ and $[A(P_n^{-2})]_{1,2}=1$.  By Laplace expansion,
\[\begin{aligned}
\det(A(P^{-2}_n)) &= -2\det(A(P^{-2}_{n-1}))-\det(A(P^{-2}_{n-2})) \\
 &= -2(-1)^{n-1}(n)-(-1)^{n-2}(n-1) \\
 &= (-1)^{n-2}(2n-n+1)=(-1)^n(n+1).
\end{aligned}\]

By the Gershgorin circles of $A(P^{-2}_n)$, the matrix does not have any positive eigenvalues.  Since the determinant is not zero, $A(P^{-2}_n)$ is a negative definite matrix.
\end{proof}

\begin{thm}
\label{thm:2cpdetinertia}
Let $G\in\CP{2:p_1,\ldots,p_m}$ and $n=|V(G)|$.  Then
\[\detD(G)=(-1)^{n-1}\left(1+\sum_{k\text{ odd}}(p_k-2)\right)\left(1+\sum_{k\text{ even}}(p_k-2)\right)\]
and $\inertiaD(G)=(1,n-1,0)$.
\end{thm}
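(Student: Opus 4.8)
The plan is to push everything through Corollary~\ref{cor:detinertia}, which reduces both quantities to the reduced graph. By that corollary, for any $G\in\CP{2:p_1,\ldots,p_m}$ we have $\detD(G)=\det(A)$ and $\inertiaD(G)=\inertia(A)$, where $A$ is the weighted adjacency matrix of the reduced graph $H$; and by Observation~\ref{obs:seesaw}, $H\cong S_{\ell,r}$ with $\ell=\sum_{k\text{ odd}}(p_k-2)$ and $r=\sum_{k\text{ even}}(p_k-2)$. Thus it suffices to compute $\det(A)$ and $\inertia(A)$ for $A=A(S_{\ell,r})$, and since $n=2+\ell+r$ the target values are exactly $(-1)^{n-1}(\ell+1)(r+1)$ and $(1,n-1,0)$.

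For the determinant I would exploit that vertex $1$ is a pendant of weight $0$ joined to vertex $2$ by an edge of weight $1$. Expanding along the first row (whose only nonzero entry is the $(1,2)$-entry, equal to $1$) and then along the first column of the resulting minor (whose only surviving entry is the old $(2,1)$-entry, again $1$) yields $\det(A)=-\det(A')$, where $A'$ is the principal submatrix on $\{3,\ldots,n\}$. Removing vertices $1$ and $2$ disconnects the two arms of the seesaw, so $A'$ is the block-diagonal matrix $A(P^{-2}_\ell)\oplus A(P^{-2}_r)$. By Lemma~\ref{wpath}, $\det(A')=(-1)^\ell(\ell+1)\cdot(-1)^r(r+1)=(-1)^{n-2}(\ell+1)(r+1)$, hence $\det(A)=(-1)^{n-1}(\ell+1)(r+1)$, which is the asserted formula.

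For the inertia, first note $\det(A)\neq 0$, so $A$ has no zero eigenvalue and $n_0=0$. The key observation is that the same principal submatrix $A'=A(P^{-2}_\ell)\oplus A(P^{-2}_r)$ is negative definite by Lemma~\ref{wpath}, with $\inertia(A')=(0,n-2,0)$. I would then apply Cauchy interlacing to this size-$(n-2)$ principal submatrix of $A$: writing the eigenvalues of $A$ as $\lambda_1\geq\cdots\geq\lambda_n$ and those of $A'$ as $\mu_1\geq\cdots\geq\mu_{n-2}$, interlacing gives $\lambda_{i+2}\leq\mu_i<0$ for $i=1,\ldots,n-2$, so $\lambda_3,\ldots,\lambda_n<0$ and therefore $n_-\geq n-2$. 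Together with $n_0=0$ this forces $n_-\in\{n-2,n-1,n\}$. Finally, the sign of the determinant satisfies $(-1)^{n_-}=\sign\det(A)=(-1)^{n-1}$, so $n_-\equiv n-1\pmod 2$; the only member of $\{n-2,n-1,n\}$ with the correct parity is $n-1$. Hence $n_-=n-1$, $n_+=1$, and $\inertia(A)=(1,n-1,0)$.

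I expect the determinant step to be routine once the pendant reduction is set up; the step requiring the most care is the interlacing-plus-parity argument for the inertia, where the bound $n_-\geq n-2$ (from negative-definiteness of the $(n-2)$-dimensional block) must be combined with the determinant sign to eliminate the remaining ambiguity. As an alternative I could instead obtain the inertia through the Haynsworth additivity formula, forming the $2\times 2$ Schur complement of the negative-definite block $A'$ and checking that its determinant is negative (so its inertia is $(1,1,0)$); but the interlacing argument is self-contained given that the determinant is already in hand.
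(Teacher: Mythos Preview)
Your argument is correct. The reduction via Corollary~\ref{cor:detinertia} and Observation~\ref{obs:seesaw} matches the paper, and your cofactor expansion through the pendant vertex~$1$ gives the same determinant formula. The genuine difference is in the inertia step: the paper performs a congruence $M\trans AM=\begin{bmatrix}0&1\\1&0\end{bmatrix}\oplus A(P^{-2}_\ell)\oplus A(P^{-2}_r)$ (using the first row/column to clear the off-diagonal $1$'s in the second row/column) and reads off both $\det(A)$ and $\inertia(A)$ from Sylvester's law in one stroke, whereas you keep the determinant and inertia arguments separate and obtain the inertia by combining Cauchy interlacing against the negative-definite $(n-2)\times(n-2)$ block with the parity constraint $(-1)^{n_-}=\sign\det(A)$. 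The paper's route is slightly more economical since a single congruence handles both quantities; your route avoids constructing the congruence explicitly and instead leverages the determinant you already computed, which is a nice self-contained alternative.
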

\begin{proof}
Let $H$ be the reduced graph of $2:p_1,\ldots,p_m$ and $A$ its weighted adjacency matrix.
By Corollary~\ref{cor:detinertia}, $\detD(G)=\det(A)$ and $\inertiaD(G)=\inertia(A)$ for any $G\in\CP{2:p_1,\ldots,p_m}$, so it is enough to find the determinant and the inertia of $A$.

Let
\[\ell=\sum_{k\text{ odd}}(p_k-2) \text{ and }r=\sum_{k\text{ even}}(p_k-2).\]
By Observation~\ref{obs:seesaw}, the reduced graph $H$ is isomorphic to $S_{\ell,r}$.  Thus, up to permutation similarity, we may write $A$ as
\[\begin{bmatrix}
0 & 1 & 0 & \bzero\trans & 0 & \bzero\trans \\
1 & 0 & 1 & \bzero\trans & 1 & \bzero\trans \\
0 & 1 & & & & \\
\bzero & \bzero & \multicolumn{2}{c}{\floating{$A(P^{-2}_{\ell})$}} & \multicolumn{2}{c}{\floating{$O$}} \\
0 & 1 & & & & \\
\bzero & \bzero & \multicolumn{2}{c}{\floating{$O$}} & \multicolumn{2}{c}{\floating{$A(P^{-2}_{r})$}} \\
\end{bmatrix}.\]
By using the first row and column to eliminate the ones on the second row and column, there is a matrix $M$ with $\det(M)=\pm 1$ such that
\[M\trans AM=\begin{bmatrix}0&1\\1&0\end{bmatrix}\oplus
A(P^{-2}_{\ell})\oplus A(P^{-2}_{r}).\]
By Lemma~\ref{wpath},
\[
\begin{aligned}
\detD(G)=\det(A) &= \det \begin{bmatrix}0&1\\1&0\end{bmatrix}\cdot
\det(A(P^{-2}_{\ell}))\cdot \det(A(P^{-2}_{r})) \\
 &= (-1)(-1)^\ell(1+\ell)(-1)^r(1+r) \\
 &= (-1)^{1+\ell+r}(1+\ell)(1+r).
\end{aligned}\]
Since $n=2+\ell+r$, it follows that $\detD(G)=(-1)^{n-1}(1+\ell)(1+r)$.  Also, since $\inertia\begin{bmatrix}0&1\\1&0\end{bmatrix}=(1,1,0)$ and both $A(P^{-2}_\ell)$ and $A(P^{-2}_r)$ are negative definite, $\inertiaD(G)=\inertia(A)=(1,n-1,0)$.
\end{proof}

\begin{thm}
\label{thm:2cpcof}
Let $G\in\CP{2:p_1,\ldots,p_m}$ and $n=|V(G)|$.  Then
\[\cofD(G)=(-1)^{n-1}n.\]
\end{thm}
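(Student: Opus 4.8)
The plan is to push the whole computation onto the weighted adjacency matrix $A$ of the reduced graph $H$ and invoke the cofactor identity already in hand. By Corollary~\ref{cor:cof}, $\cofD(G)=\det(A+J_{2,n})-\det(A)$, and the computation inside Theorem~\ref{thm:2cpdetinertia} already gives $\det(A)=(-1)^{n-1}(1+\ell)(1+r)$ with $\ell=\sum_{k\text{ odd}}(p_k-2)$, $r=\sum_{k\text{ even}}(p_k-2)$, and $n=2+\ell+r$. So the only new object is $\det(A+J_{2,n})$.

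First I would reuse the seesaw block form from the proof of Theorem~\ref{thm:2cpdetinertia}. The two weight-$0$ vertices $1,2$ are distinguished ($b_k\geq 2$ for all $k\geq 3$ forces vertex $1$ to be a pendant attached to vertex $2$), so the permutation similarity taking $A$ to seesaw form fixes vertices $1,2$ and $J_{2,n}$ stays on the top-left $2\times 2$ block. I write
\[A=\begin{bmatrix}0&1&\bzero\trans\\1&0&\mathbf{v}\trans\\\bzero&\mathbf{v}&B\end{bmatrix},\qquad B=A(P^{-2}_\ell)\oplus A(P^{-2}_r),\]
where $\mathbf{v}$ is the $0/1$ indicator of the two path-endpoints joined to vertex $2$; then $A+J_{2,n}$ only replaces the top-left block by $\begin{bmatrix}1&2\\2&1\end{bmatrix}$, and expanding along its first row $(1,2,\bzero\trans)$ gives
\[\det(A+J_{2,n})=\det\begin{bmatrix}1&\mathbf{v}\trans\\\mathbf{v}&B\end{bmatrix}-4\det(B),\]
the $-4\det(B)$ coming from the block-triangular minor left after deleting row $1$ and column $2$.

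Next I would evaluate the bordered determinant through the polynomial identity $\det\begin{bmatrix}1&\mathbf{v}\trans\\\mathbf{v}&B\end{bmatrix}=\det(B)-\mathbf{v}\trans\operatorname{adj}(B)\mathbf{v}$ (the Schur complement of $B$). Because $B$ is block diagonal and $\mathbf{v}$ is supported on the two path-starts, $\mathbf{v}\trans\operatorname{adj}(B)\mathbf{v}$ is the sum of the two diagonal cofactors of $B$ there; deleting a path-start turns $A(P^{-2}_\ell)$ into $A(P^{-2}_{\ell-1})$ while leaving the other block untouched, so each cofactor is a product of two weighted-path determinants supplied by Lemma~\ref{wpath}. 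This yields $\det(B)=(-1)^{\ell+r}(\ell+1)(r+1)$ and $\mathbf{v}\trans\operatorname{adj}(B)\mathbf{v}=(-1)^{\ell+r-1}(2\ell r+\ell+r)$.

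Finally I would assemble the pieces: the two displays give $\det(A+J_{2,n})=-3\det(B)-\mathbf{v}\trans\operatorname{adj}(B)\mathbf{v}$, and since $\det(A)=-\det(B)$ (as computed in Theorem~\ref{thm:2cpdetinertia}), Corollary~\ref{cor:cof} yields $\cofD(G)=-2\det(B)-\mathbf{v}\trans\operatorname{adj}(B)\mathbf{v}$. Substituting the two values above, the bracketed expression collapses to $-(\ell+r+2)=-n$, so $\cofD(G)=(-1)^{n-1}n$. The step I expect to be the main obstacle is the sign and index bookkeeping in the rank-one border term---keeping the adjugate sign straight relative to $B^{-1}$ and confirming that the degenerate cases $m=0$ and $\ell=0$ or $r=0$ (an empty path block, so $\mathbf{v}$ has a single nonzero coordinate and one factor is an empty determinant equal to $1$) are handled by exactly the same formulas, which they are because Lemma~\ref{wpath} is valid for all $n\geq 1$.
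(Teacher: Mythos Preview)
Your proposal is correct and tracks the paper's proof closely: both invoke Corollary~\ref{cor:cof}, pass to the seesaw block form of Observation~\ref{obs:seesaw}/Theorem~\ref{thm:2cpdetinertia}, and feed everything through Lemma~\ref{wpath}. The one place you diverge is the evaluation of $\det(A+J_{2,n})$: you expand along the first row and apply the bordered-determinant identity $\det\bigl[\begin{smallmatrix}1&\mathbf{v}^\top\\\mathbf{v}&B\end{smallmatrix}\bigr]=\det(B)-\mathbf{v}^\top\operatorname{adj}(B)\mathbf{v}$, then compute the two adjugate entries blockwise; the paper instead takes the Schur complement at the $(1,1)$-entry (turning the top-left $2\times 2$ block into $\bigl[\begin{smallmatrix}1&0\\0&-3\end{smallmatrix}\bigr]$) and splits the row $[-3,1,\bzero^\top,1,\bzero^\top]$ as $[-2,1,\bzero^\top,1,\bzero^\top]+[-1,0,\ldots,0]$, so that the first summand is recognized directly as $\det(A(P^{-2}_{\ell+r+1}))=\det(A(P^{-2}_{n-1}))=(-1)^{n-1}n$. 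Their trick sidesteps your $2\ell r+\ell+r$ arithmetic and makes the cancellation $\cofD(G)=\det(A(P^{-2}_{n-1}))$ immediate, but your route is equally valid and handles the degenerate cases the same way.
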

\begin{proof}
Let $H$ be the reduced graph of $2:p_1,\ldots,p_m$ and $A$ its weighted adjacency matrix.  By Corollary~\ref{cor:cof}, \[\cofD(G)=\det(A+J_{2,n})-\det(A)\]
for any $G\in\CP{2:p_1,\ldots,p_m}$.

Let
\[\ell=\sum_{k\text{ odd}}(p_k-2) \text{ and }r=\sum_{k\text{ even}}(p_k-2).\]
By Observation~\ref{obs:seesaw}, up to permutation similarity, the matrix $A+J_{2,n}$ can be written as
\[\begin{bmatrix}
1 & 2 & 0 & \bzero\trans & 0 & \bzero\trans \\
2 & 1 & 1 & \bzero\trans & 1 & \bzero\trans \\
0 & 1 & & & & \\
\bzero & \bzero & \multicolumn{2}{c}{\floating{$A(P^{-2}_{\ell})$}} & \multicolumn{2}{c}{\floating{$O$}} \\
0 & 1 & & & & \\
\bzero & \bzero & \multicolumn{2}{c}{\floating{$O$}} & \multicolumn{2}{c}{\floating{$A(P^{-2}_{r})$}} \\
\end{bmatrix},
\text{ which leads to }
\begin{bmatrix}
1 & 0 & 0 & \bzero\trans & 0 & \bzero\trans \\
0 & -3 & 1 & \bzero\trans & 1 & \bzero\trans \\
0 & 1 & & & & \\
\bzero & \bzero & \multicolumn{2}{c}{\floating{$A(P^{-2}_{\ell})$}} & \multicolumn{2}{c}{\floating{$O$}} \\
0 & 1 & & & & \\
\bzero & \bzero & \multicolumn{2}{c}{\floating{$O$}} & \multicolumn{2}{c}{\floating{$A(P^{-2}_{r})$}} \\
\end{bmatrix}\]
by applying the Schur complement to the $1,1$-entry; that is, subtract twice of the first column from the second column and then do the same for rows.  Since
\[\begin{bmatrix} -3 &  1 & \bzero\trans & 1 & \bzero\trans\end{bmatrix}
=\begin{bmatrix} -2 & 1 & \bzero\trans & 1 & \bzero\trans\end{bmatrix}
+\begin{bmatrix} -1 & 0 & \bzero\trans & 0 & \bzero\trans\end{bmatrix},\]
it follows that
\[\begin{aligned}
\det(A+J_{2,n}) &= \det(A(P^{-2}_{\ell+r+1}))-\det(A(P^{-2}_{\ell}))\det(A(P^{-2}_{r})) \\
 &=\det(A(P^{-2}_{n-1}))+\detD(G) \\
 &=(-1)^{n-1}n+\detD(G).
\end{aligned}\]
This means $\cofD(G)=\det(A+J_{2,n})-\detD(G)=(-1)^{n-1}n$.
\end{proof}

\begin{rem}
Let $s=2:p_1,\ldots, p_m$.  According to Theorem~\ref{thm:2cpdetinertia}, the distance determinant of a graph in $\CP{s}$ only depends on the sum of the odd terms and the sum of the even terms.  Therefore, applying any permutation to the odd terms and any permutation to the even terms of $s$ will not change the distance determinant.
\end{rem}

\begin{cor}
\label{cor:lineartwotree}
Let $G$ be a linear $2$-tree on $n$ vertices.  Then
\[\detD(G)=(-1)^{n-1}\left(1+\left\lfloor\frac{n-2}{2}\right\rfloor\right)\left(1+\left\lceil\frac{n-2}{2}\right\rceil\right),\]
$\inertiaD(G)=(1,n-1,0)$, and $\cofD(G)=(-1)^{n-1}n$.
\end{cor}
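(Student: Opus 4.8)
The plan is to recognize that a linear $2$-tree on $n$ vertices is exactly a graph in $\CP{2:p_1,\ldots,p_m}$ with every $p_k=3$, so the result should follow immediately by specializing Theorem~\ref{thm:2cpdetinertia} and Theorem~\ref{thm:2cpcof}. Indeed, the excerpt already notes that when $p_1=\cdots=p_m=3$, the family $\CP{2:p_1,\ldots,p_m}$ is precisely the family of linear $2$-trees on $m+2$ vertices. So first I would fix $n$ and set $m=n-2$, taking each $p_k=3$; this realizes $G$ as a member of $\CP{2:\underbrace{3,\ldots,3}_{m}}$.

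Next I would compute the two relevant sums. With every $p_k=3$ we have $p_k-2=1$, so $\ell=\sum_{k\text{ odd}}(p_k-2)$ counts the number of odd indices $k$ in $[1,m]$ and $r=\sum_{k\text{ even}}(p_k-2)$ counts the even indices. Substituting $m=n-2$, this gives $\ell=\lceil (n-2)/2\rceil$ and $r=\lfloor (n-2)/2\rfloor$ (or the reverse, depending on parity, but the formula is symmetric in $\ell$ and $r$). Plugging into Theorem~\ref{thm:2cpdetinertia} yields
\[\detD(G)=(-1)^{n-1}(1+\ell)(1+r)=(-1)^{n-1}\left(1+\lrfloor{\tfrac{n-2}{2}}\right)\left(1+\lrceil{\tfrac{n-2}{2}}\right),\]
and the inertia statement $\inertiaD(G)=(1,n-1,0)$ is read off directly from the same theorem. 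The cofactor statement $\cofD(G)=(-1)^{n-1}n$ follows at once from Theorem~\ref{thm:2cpcof}.

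The only point requiring a moment of care is matching the floor and ceiling to $\ell$ and $r$ correctly, since the count of odd versus even indices in $[1,m]$ depends on the parity of $m=n-2$; but because the determinant formula is symmetric in $\ell$ and $r$, the assignment does not affect the final product, and one simply observes $\{\ell,r\}=\{\lfloor (n-2)/2\rfloor,\lceil (n-2)/2\rceil\}$. There is essentially no obstacle here: the corollary is a direct specialization, and the main (trivial) step is the bookkeeping identifying the parameter sums with the floor/ceiling expressions.
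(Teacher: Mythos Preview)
Your proposal is correct and follows exactly the same approach as the paper: identify a linear $2$-tree on $n$ vertices with a graph in $\CP{2:p_1,\ldots,p_{n-2}}$ with all $p_k=3$, and specialize Theorems~\ref{thm:2cpdetinertia} and~\ref{thm:2cpcof}. You actually supply more detail than the paper's own proof, which simply asserts that the results follow from those theorems without writing out the floor/ceiling bookkeeping.
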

\begin{proof}
The family of linear $2$-tree on $n$ vertices are the graphs in $\CP{2:p_1,\ldots,p_{n-2}}$, where $p_1=\cdots =p_{n-2}=2$.  Then the results follow from Theorems~\ref{thm:2cpdetinertia} and \ref{thm:2cpcof}.
\end{proof}

\section{Applications to the addressing problem}
\label{sec:address}

In this section we will study the applications of our results to the addressing problem and show that $N(G)=|V(G)|-1$ for graphs each of whose blocks are $2$-clique paths.

The following lemma is from \cite{GHH77}.
\begin{lem}
\label{blocklem}
{\rm \cite{GHH77}} If $G$ is a connected graph with blocks $G_1,\ldots,G_r$, then
\[
\begin{aligned}
\cofD(G) &= \prod_{i=1}^r\cofD(G_i) \\
\detD(G) &= \sum_{i=1}^r\detD(G_i)\prod_{j\ne i}\cofD(G_j).
\end{aligned}
\]
\end{lem}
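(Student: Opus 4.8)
The final statement is Lemma~\ref{blocklem}, which is attributed to \cite{GHH77}. Let me think about how I would prove this block decomposition formula for the distance determinant and cofactor sum.

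The plan is to reduce to the two-block case and then induct on the number of blocks. The structural fact I would use is that if $v$ is a cut-vertex of a connected graph $G$ separating it into a subgraph $G_1$ (containing $v$) and the rest $G_2$ (also containing $v$, with $V(G_1)\cap V(G_2)=\{v\}$), then for any $x\in V(G_1)$ and $y\in V(G_2)$ we have the additivity of distances through the cut-vertex: $\dist_G(x,y)=\dist_{G_1}(x,v)+\dist_{G_2}(v,y)$. This is the key geometric input, and it lets me write the off-diagonal blocks of $\calD(G)$ as rank-one-plus-structured pieces built from the distance-to-$v$ vectors in each part.

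First I would set up the two-block case. Index the vertices so that $V(G_1)\setminus\{v\}$ comes first, then $v$, then $V(G_2)\setminus\{v\}$. Writing $\calD(G)$ in this block form, the cross block between $V(G_1)\setminus\{v\}$ and $V(G_2)\setminus\{v\}$ factors, by the additivity above, as $\br_1\bone\trans+\bone\br_2\trans$, where $\br_i$ is the vector of distances from the non-$v$ vertices of $G_i$ to $v$. I would then perform column/row operations (subtracting the $v$-row, resp. $v$-column, appropriately) to clear these cross terms, turning $\calD(G)$ into a block-triangular-like form whose determinant and cofactor can be read off from $\calD(G_1)$ and $\calD(G_2)$. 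The cleanest way to package this is to recall the identity $\cof(A)=\det(A+J)-\det(A)$ (valid for the all-ones matrix $J$, as used already in Corollary~\ref{cor:cof} via \cite[Lemma 9.3]{BapatGM14}); working with $\det$ and $\cof$ simultaneously, or equivalently with the $2\times 2$ bordered determinant $\det\!\begin{bmatrix}0&\bone\trans\\ \bone&\calD\end{bmatrix}$, makes the two formulas fall out of a single computation.

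Concretely, for two blocks the target identities are
\[
\cofD(G)=\cofD(G_1)\cofD(G_2),\qquad
\detD(G)=\detD(G_1)\cofD(G_2)+\detD(G_2)\cofD(G_1).
\]
I would verify these by evaluating the bordered determinant $B(M)=\det\!\begin{bmatrix}0&\bone\trans\\ \bone&M\end{bmatrix}$, noting $B(M)=-\cof(M)$ and that $\det$ and $B$ behave multiplicatively under the cut-vertex splitting. Once the two-block identities are established, the general statement follows by induction on $r$: peel off one block $G_r$ attached at a cut-vertex to the union $G'=G_1\cup\cdots\cup G_{r-1}$ of the remaining blocks, apply the two-block formula to $G=G'\oplus G_r$, and substitute the inductive expressions $\cofD(G')=\prod_{i<r}\cofD(G_i)$ and $\detD(G')=\sum_{i<r}\detD(G_i)\prod_{j\ne i,\,j<r}\cofD(G_j)$. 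The product formula for $\cofD$ is immediate, and the sum formula for $\detD$ telescopes correctly because each term picks up exactly the missing $\cofD$ factors.

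The main obstacle is the bookkeeping in the row/column reduction that clears the cross block: one must check that the operations used to eliminate $\br_1\bone\trans+\bone\br_2\trans$ simultaneously produce the correct diagonal correction so that the residual blocks are genuinely $\calD(G_1)$ and $\calD(G_2)$ (and their bordered analogues), rather than perturbed versions of them. Handling the shared cut-vertex row/column — which belongs to both blocks — without double-counting is the delicate point; phrasing everything through the bordered determinant $B(\cdot)$ and the additivity $\dist_G(x,y)=\dist_{G_1}(x,v)+\dist_{G_2}(v,y)$ is what keeps this manageable. Since this is a known result of Graham, Hoffman, and Hosoya, I would cite \cite{GHH77} for the full computation and only sketch the cut-vertex additivity and the inductive step here.
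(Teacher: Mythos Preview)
The paper does not prove this lemma at all; it merely states it and attributes it to \cite{GHH77}. Your sketch --- reducing to the two-block case via cut-vertex distance additivity, packaging $\det$ and $\cof$ together through the bordered determinant, and inducting on the number of blocks --- is the standard argument and is essentially how Graham, Hoffman, and Hosoya proceed, so your closing remark that you would simply cite \cite{GHH77} is exactly what the paper does.
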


For any real number $x$, define $\sign(x)$ as $1$, $0$, or $-1$ when $x$ is positive, zero, or negative, respectively. Then we have the following lemma about $\sign(\detD(G))$.

\begin{lem}
\label{signlem}
Let $G$ be a connected graph of order $n$ with $r$ blocks $G_1,\ldots,G_r$ of order $n_1,\ldots,n_r$, respectively.  If
\[\sign(\detD(G_i))=\sign(\cofD(G_i))=(-1)^{n_i-1}\]
for $1\leq i\leq r$, then
$$\sign(\detD(G))=\sign(\cofD(G))=(-1)^{n-1}.$$
\end{lem}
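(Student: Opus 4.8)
The plan is to prove both sign identities simultaneously by induction on the number of blocks $r$, using Lemma~\ref{blocklem} as the engine. The base case $r=1$ is immediate: the single block is $G$ itself, $n_1=n$, and the hypothesis directly gives $\sign(\detD(G))=\sign(\cofD(G))=(-1)^{n-1}$. For the inductive step, the cleaner route is to first establish the claim for $\cofD(G)$ using the product formula, and then deduce the claim for $\detD(G)$ from the sum formula, since the latter expresses $\detD(G)$ in terms of the $\detD(G_i)$ and the $\cofD(G_j)$.

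For the cofactor, I would apply the product formula $\cofD(G)=\prod_{i=1}^r \cofD(G_i)$ directly. Under the hypothesis, each factor satisfies $\sign(\cofD(G_i))=(-1)^{n_i-1}$, so the sign of the product is $(-1)^{\sum_{i=1}^r (n_i-1)}$. The key bookkeeping fact is that the blocks of a connected graph overlap only in cut-vertices, so $\sum_{i=1}^r (n_i-1)=n-1$; equivalently $\sum_i n_i = n + (r-1)$ since gluing $r$ blocks along cut-vertices identifies exactly $r-1$ vertices in a tree-of-blocks structure. This yields $\sign(\cofD(G))=(-1)^{n-1}$ as desired.

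For the determinant, I would examine each summand $\detD(G_i)\prod_{j\ne i}\cofD(G_j)$ in the sum formula. By the hypothesis its sign is
\[
(-1)^{n_i-1}\prod_{j\ne i}(-1)^{n_j-1}=(-1)^{\sum_{j=1}^r (n_j-1)}=(-1)^{n-1},
\]
which is independent of $i$. Hence every term in the sum carries the \emph{same} sign $(-1)^{n-1}$, and none of them is zero, so the whole sum has sign $(-1)^{n-1}$ and in particular is nonzero. This gives $\sign(\detD(G))=(-1)^{n-1}$, completing the step. Because all terms share a common sign, there is no cancellation to worry about, which is what makes the argument go through cleanly.

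The main obstacle, and the only genuinely substantive point, is the counting identity $\sum_{i=1}^r (n_i-1)=n-1$ for the block decomposition. This is the standard fact that the block-cut tree has $r$ blocks and that summing $(n_i-1)$ over blocks telescopes to $n-1$ because each non-cut-vertex lies in exactly one block while the overlaps are precisely the cut-vertices. I would state this explicitly and justify it by induction on $r$ (peeling off a leaf block of the block-cut tree, which shares exactly one vertex with the rest), rather than treating it as self-evident. Everything else is a direct sign computation from Lemma~\ref{blocklem}, with no cancellation since all contributing terms share the sign $(-1)^{n-1}$.
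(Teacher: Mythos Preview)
Your argument is correct and essentially identical to the paper's: both apply Lemma~\ref{blocklem} directly, observe that every factor (for $\cofD$) and every summand (for $\detD$) carries the sign $(-1)^{\sum_i(n_i-1)}=(-1)^{n-1}$, and conclude. The induction-on-$r$ framing you set up is never actually used in the body of your argument and can be dropped; your explicit justification of the counting identity $\sum_i(n_i-1)=n-1$ is a welcome detail that the paper simply assumes.
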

\begin{proof}
By Lemma~\ref{blocklem},
\[\begin{aligned}\sign(\cofD(G)) &= \prod_{i=1}^r\sign(\cofD(G_i)) \\
 &= (-1)^{\sum_{i=1}^r(n_i-1)}=(-1)^{n-1}.
\end{aligned}\]
Similarly,
\[\sign(\detD(G_i)\prod_{j\ne i}\cofD(G_j)) = (-1)^{\sum_{i=1}^r(n_i-1)}=(-1)^{n-1}\]
and $\sign(\detD(G))=(-1)^{n-1}$.
\end{proof}

\begin{cor}
\label{signcor}
If $G$ is a connected graph of order $n$ whose blocks are $2$-clique paths, then
$\sign(\detD(G))=\sign(\cofD(G))=(-1)^{n-1}.$
\end{cor}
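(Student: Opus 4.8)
The plan is to reduce Corollary~\ref{signcor} to a direct application of Lemma~\ref{signlem}. That lemma already does all the combinatorial work of combining blocks: if every block $G_i$ of order $n_i$ satisfies $\sign(\detD(G_i))=\sign(\cofD(G_i))=(-1)^{n_i-1}$, then the whole graph $G$ of order $n$ satisfies $\sign(\detD(G))=\sign(\cofD(G))=(-1)^{n-1}$. So the only thing left to verify is the per-block hypothesis: that each $2$-clique path satisfies the required sign condition on its distance determinant and distance cofactor.

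First I would invoke the explicit formulas for $2$-clique paths. By Theorem~\ref{thm:2cpdetinertia}, a $2$-clique path $G_i$ of order $n_i$ has
\[
\detD(G_i)=(-1)^{n_i-1}\left(1+\sum_{k\text{ odd}}(p_k-2)\right)\left(1+\sum_{k\text{ even}}(p_k-2)\right),
\]
and by Theorem~\ref{thm:2cpcof}, $\cofD(G_i)=(-1)^{n_i-1}n_i$. In both expressions the factors multiplying $(-1)^{n_i-1}$ are strictly positive: $n_i\geq 2>0$, and each of the two sums is a sum of nonnegative terms $p_k-2\geq 0$ plus $1$, hence at least $1$. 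Therefore $\sign(\detD(G_i))=\sign(\cofD(G_i))=(-1)^{n_i-1}$ for every block.

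With the block-level hypothesis confirmed, the conclusion follows immediately: apply Lemma~\ref{signlem} to $G$ with blocks $G_1,\ldots,G_r$, each a $2$-clique path, to obtain $\sign(\detD(G))=\sign(\cofD(G))=(-1)^{n-1}$. I do not anticipate any genuine obstacle here, since the heavy lifting was already carried out in Lemma~\ref{signlem} and in the formulas of Theorems~\ref{thm:2cpdetinertia} and \ref{thm:2cpcof}. The only point requiring a word of care is the positivity of the leading factors in the determinant formula, namely that both parenthesized sums are at least $1$; this is where one must note that each $p_k\geq 3$, so $p_k-2\geq 1$, and in particular each nonempty sum contributes positively while the ``$+1$'' guarantees positivity even when a sum is empty. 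Hence the proof is essentially a two-line citation of the preceding results.
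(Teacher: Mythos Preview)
Your proposal is correct and follows essentially the same approach as the paper: verify the per-block sign condition using Theorems~\ref{thm:2cpdetinertia} and~\ref{thm:2cpcof}, then apply Lemma~\ref{signlem}. The paper's proof is even terser than yours, omitting the explicit positivity check of the factors, but the logic is identical.
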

\begin{proof}
Let the blocks of $G$ be $G_1,\ldots,G_r$ of order $n_1,\ldots,n_r$, respectively. By Theorems~\ref{thm:2cpdetinertia} and \ref{thm:2cpcof},
\[\sign(\detD(G_i))=\sign(\cofD(G_i))=(-1)^{n_i-1}\]
for $1\leq i\leq r$. By Lemma \ref{signlem}, $\sign(\detD(G))=\sign(\cofD(G))=(-1)^{n-1}$.
\end{proof}

In 1950, Jones \cite{Jones50} gave an approach to get the inertia of a symmetric matrix from its principal leading minors. Lemma~\ref{inertialem} states Theorem 4 in \cite{Jones50}.

\begin{lem}
\label{inertialem}
{\rm \cite{Jones50}}
Let $A$ be a nonsingular symmetric $n\times n$ matrix with principal leading minors $D_1,\ldots,D_n$. If there is no consecutive two zeros in the sequence $D_1,\ldots,D_n$, then
$n_-$ is the number of sign changes in the sequence $1,D_1,\ldots,D_n$, ignoring the zeros in the sequence.
\end{lem}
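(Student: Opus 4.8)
The plan is to prove the statement by tracking the inertia of the leading principal submatrices $A_1,\ldots,A_n$, where $A_k$ denotes the top-left $k\times k$ block of $A$, so that $\det(A_k)=D_k$. Set $D_0=1$ and let $A_0$ be the empty matrix. I will maintain the invariant that whenever $D_k\neq 0$, the matrix $A_k$ is nonsingular and $n_-(A_k)$ equals the number of sign changes in $1,D_1,\ldots,D_k$ (ignoring zeros). Taking $k=n$ then finishes the proof, because $D_n=\det(A)\neq 0$ by nonsingularity. The two engines throughout are Sylvester's law of inertia (congruence preserves inertia) and additivity of inertia over block-diagonal matrices, $\inertia(X\oplus Y)=\inertia(X)+\inertia(Y)$.

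The argument walks from one nonzero minor to the next. Suppose I have reached a nonzero minor $D_{k-1}$ (or the start $D_0=1$), so $A_{k-1}$ is nonsingular. Symmetric Gaussian elimination on the first $k-1$ coordinates produces a congruence $P\trans A P=A_{k-1}\oplus S$ with $P$ unit triangular; the leading $j\times j$ block $S_j$ of the Schur complement $S$ is itself the Schur complement of $A_{k-1}$ inside $A_{k-1+j}$, so $\det(A_{k-1+j})=D_{k-1}\det(S_j)$ and $\inertia(A_{k-1+j})=\inertia(A_{k-1})+\inertia(S_j)$. If $D_k\neq 0$, I take a $1\times 1$ step: the $(1,1)$-entry $S_1$ of $S$ equals $D_k/D_{k-1}$, whose sign is $\sign(D_k)\sign(D_{k-1})$. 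Hence $n_-$ increases by exactly $1$ when $D_k$ and $D_{k-1}$ have opposite signs and is otherwise unchanged, which is precisely when a sign change is recorded; the invariant is preserved.

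The crucial case is $D_k=0$, where a single pivot is unavailable; here the hypothesis forbidding two consecutive zeros guarantees $D_{k+1}\neq 0$, and I take a $2\times 2$ step to reach $A_{k+1}$. Because $\det(A_k)=D_{k-1}\,S_1$ and $D_k=0$, the leading entry $S_1$ of the Schur complement vanishes, so $S_2=\begin{bmatrix}0&\beta\\ \beta&\gamma\end{bmatrix}$ with $\det(S_2)=-\beta^2=D_{k+1}/D_{k-1}$. Since $D_{k+1}\neq 0$, this determinant is strictly negative, forcing $\beta\neq 0$ and $\inertia(S_2)=(1,1,0)$: the block is automatically indefinite. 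Thus passing from $A_{k-1}$ to $A_{k+1}$ increases $n_-$ by exactly $1$. On the minor side, $D_{k+1}/D_{k-1}<0$ says $D_{k-1}$ and $D_{k+1}$ have opposite signs, so skipping the zero $D_k$ records exactly one sign change, again preserving the invariant. Iterating these steps from $D_0=1$ down to $D_n$ completes the induction.

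The main obstacle, and the only place the precise hypothesis enters, is this $2\times 2$ case: one must observe that the vanishing of $D_k$ pins the local leading entry $S_1$ to zero, which is exactly what makes the $2\times 2$ block indefinite with a determinant of the predicted sign. Everything else is Sylvester's law together with routine bookkeeping of sign changes. (This is the \emph{Gundelfinger--Frobenius} refinement of Jacobi's classical minor criterion, and the ``no two consecutive zeros'' hypothesis is precisely the condition under which the refinement remains valid.)
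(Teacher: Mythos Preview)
The paper does not prove this lemma; it cites it from Jones (1950) as a known result, with a brief remark clarifying the equivalent original phrasing. Your proof is correct and self-contained: the Schur-complement reduction cleanly handles both the nonzero-pivot case (a $1\times 1$ step whose sign $D_k/D_{k-1}$ records exactly one sign change or none) and the isolated-zero case (a $2\times 2$ step whose determinant $D_{k+1}/D_{k-1}=-\beta^2<0$ forces inertia $(1,1,0)$, matching the single sign change between $D_{k-1}$ and $D_{k+1}$ after the zero is skipped). The bookkeeping that the leading $j\times j$ block of the Schur complement is the Schur complement inside $A_{k-1+j}$, and that the walk in steps of $1$ or $2$ always terminates exactly at $n$ because $D_n\neq 0$, is all sound. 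This is indeed the classical Gundelfinger--Frobenius argument, and you have executed it without gaps.
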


\begin{rem}
The original statement of \cite[Theorem 4]{Jones50} says that $n_-$ is the number of sign changes in the sequence $1,D_1,\ldots,D_n$, where any zero $D_i$ may be given arbitrary sign. It was also shown that every zero is guaranteed to appear between a `$+$' and a `$-$' under the assumption; therefore, ignoring the zeros leads to the same number of sign changes.
\end{rem}

\begin{thm}\label{thm:inertiablock}
If $G$ is a connected graph of order $n$ whose blocks are $2$-clique paths, then
\[\inertiaD(G)=(1,n-1,0).\]
\end{thm}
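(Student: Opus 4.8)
The plan is to deduce the inertia from Jones's criterion (Lemma~\ref{inertialem}) after exhibiting a vertex ordering whose leading principal minors have a completely alternating sign pattern. First, Corollary~\ref{signcor} gives $\sign(\detD(G))=(-1)^{n-1}\neq 0$, so $\calD(G)$ is nonsingular and $n_0=0$; thus it suffices to prove $n_-=n-1$, which forces $n_+=1$. To apply Lemma~\ref{inertialem} I need the leading principal minors $D_1,\dots,D_n$ of $\calD(G)$ under a carefully chosen ordering of $V(G)$, together with the verification that this sequence contains no two consecutive zeros.

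The heart of the argument is to choose an ordering $v_1,\dots,v_n$ so that for every $k$ the induced subgraph $H_k=G[\{v_1,\dots,v_k\}]$ is connected, is an isometric subgraph of $G$, and has all of its blocks equal to $2$-clique paths. Granting such an ordering, isometry means that the leading $k\times k$ principal submatrix of $\calD(G)$ is exactly $\calD(H_k)$, so $D_k=\detD(H_k)$. For $k\geq 2$ the graph $H_k$ is a connected graph of order $k$ whose blocks are $2$-clique paths, whence $\sign(D_k)=(-1)^{k-1}$ by Corollary~\ref{signcor}; and $D_1=0$ because the single diagonal entry of $\calD$ is $0$.

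Consequently the sign sequence $1,D_1,D_2,\dots,D_n$ reads $+,0,-,+,-,\dots,(-1)^{n-1}$. The only zero is $D_1$, which lies between the leading $+$ and $D_2<0$, so there are no two consecutive zeros and Lemma~\ref{inertialem} applies. Deleting the zero leaves the strictly alternating sequence $1,D_2,D_3,\dots,D_n$ of $n$ terms, which exhibits $n-1$ sign changes; hence $n_-=n-1$. Together with $n_0=0$ this gives $n_+=1$ and $\inertiaD(G)=(1,n-1,0)$.

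The main obstacle is constructing the ordering with the three stated properties. I would build it by traversing the block--cut tree of $G$ in a depth-first manner, so that each newly processed block $B$ meets the already-placed vertices in a single cut vertex $c$. Within $B$ I would order the remaining vertices by a neighborhood-sequence order rooted at $c$, for which each prefix is a connected isometric sub-$2$-clique-path of $B$; the isometry inside a block is exactly the content of Lemma~\ref{abclem} and Corollary~\ref{abccor}, and each such prefix is again a $2$-clique path (a truncated non-leaping sequence of type $2:\dots$ is still of that type, with the degenerate case $K_2=\CP{0,1}$ allowed as a block). The delicate point is that $c$ may be an interior vertex of $B$, so I must show that a $2$-clique path admits, from any prescribed starting vertex, such an expanding family of isometric sub-clique-paths; I would prove this by growing outward from the clique(s) containing $c$ and then marching along the clique path toward each end. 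Finally, because distances in $G$ are additive through cut vertices, gluing these block orderings yields prefixes $H_k$ that are isometric in all of $G$, not merely within each block, which is precisely what the sign computation above requires.
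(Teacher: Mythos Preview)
Your overall strategy is exactly the paper's: pick a vertex ordering so that every prefix $H_k=G[\{v_1,\dots,v_k\}]$ is connected, isometric in $G$, and has $2$-clique paths as blocks; then Corollary~\ref{signcor} forces $\sign(D_k)=(-1)^{k-1}$ for $k\ge 2$ and $D_1=0$, and Lemma~\ref{inertialem} finishes. The only substantive difference is how the ordering is built.

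The paper avoids the ``delicate point'' you flagged by constructing the ordering \emph{backwards} rather than forwards. It first proves the small claim that any $2$-clique path $H$ on at least three vertices has two distinct vertices whose removal still leaves a $2$-clique path (take a non-gluing vertex from each end clique, or any two vertices if $H$ is a single clique). Then, at each stage, it picks a \emph{pendent} block $B$ of the current graph, lets $x$ be its unique cut vertex, and sets $v_n$ to be one of the two removable vertices of $B$ that is not $x$. Because $B$ is pendent and $B-v_n$ is still a (connected) $2$-clique path, removing $v_n$ does not alter any distance among the remaining vertices, so the isometry of all prefixes is automatic and never requires rooting a $2$-clique path at an interior vertex. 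Your forward DFS would work, but the invocation of Lemma~\ref{abclem} and Corollary~\ref{abccor} for isometry presupposes a neighborhood-sequence labeling starting at the attaching edge, which is exactly what fails when the cut vertex sits in the middle of the clique path; the backward-peeling argument sidesteps this relabeling issue entirely and makes the isometry step a one-line observation.
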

\begin{proof}
We will show that there is an ordering of the vertices $v_1,\ldots,v_n$ such that the induced subgraph $G[\{v_1,\ldots,v_k\}]$ is also a connected graph whose blocks are $2$-clique paths for any $k\geq 2$.

First we claim that if $H$ is a $2$-clique path with $|V(H)|\geq 3$, then there exists at least two vertices $u$ and $v$ such that both $H-u$ and $H-v$ are a $2$-clique path.  Let $H\in \CP{2:p_1,\ldots,p_m}$.  Then $H$ is obtained by gluing the cliques $K_{p_1},\ldots,K_{p_m}$ into a path-like structure.  If $m\geq 2$, then we may pick a vertex from each of the two ending cliques so that the vertex was not used for gluing.  If $m=1$, then $H$ is a complete graph with $|V(H)|\geq 3$, so removing any vertex from $H$ gives a smaller complete graph.  In either cases, we found two vertices with the desired property.

If $G$ itself is a block with $|V(G)|\geq 3$, then by the previous claim we can find a vertex $v_n$ such that $G-v_n$ is still a $2$-clique paths.  If $G$ has more than one blocks, then there is a pendent block $B$, which is incident to only one cut-vertex $x$.  If $B=K_2$, then let $v_n$ be the vertex in $B$ other than $x$.  If $B$ has more than three vertices, then there are two vertices $u$ and $v$ such that $B-u$ and $B-v$ are still $2$-clique paths, so we may pick $v_n$ as one of $u$ and $v$ that is different from $x$.  In either cases, $G-v_n$ is still a graph whose blocks are $2$-clique paths.  Inductively, keep removing a vertex while preserving the structure, and name the removed vertices as $v_n,\ldots,v_3$.  This process will stop when the remaining graph is $K_2$.  At this point, name the remaining two vertices as $v_2$ and $v_1$ in any order.

Thus, we find an ordering of the vertices $v_1,\ldots, v_n$ such that the induced subgraph $G[\{v_1,\ldots,v_k\}]$ is also a connected graph whose blocks are $2$-clique paths for any $k\geq 2$.  Note that by the way we choose $v_n$, removing $v_n$ does not change the distance of any pair of vertices in the remaining graph.  Therefore,
\[\calD(G[\{v_1,\ldots,v_k\}])=\calD(G)[\{v_1,\ldots,v_k\}],\]
where on the right hand side is the principal submatrix of $\calD(G)$ induced on $\{v_1,\ldots,v_k\}$.  Let $D_k=\det(\calD(G[\{v_1,\ldots,v_k\}]))$.  Then $D_1,\ldots,D_n$ are the principal leading minors by the order $v_1,\ldots, v_n$.  By Corollary~\ref{signcor},
\[\sign(D_k)=\begin{cases}
0 & \text{if }k=1, \\
(-1)^{k-1} & \text{if }k\geq 2.
\end{cases}\]
There are $n-1$ sign changes in the sequence $1,D_1,\ldots,D_n$, so $\inertiaD(G)=(1,n-1,0)$ by Lemma~\ref{inertialem}.
\end{proof}

\begin{cor}
[touch the addressing bound]
\label{cor:touch}
If $G$ is a connected graph of order $n$ whose blocks are 2-clique paths, then
$N(G)=n-1.$
\end{cor}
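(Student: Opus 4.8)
The plan is to combine the inertia computation from Theorem~\ref{thm:inertiablock} with the two classical bounds on $N(G)$ recorded in Section~\ref{sec:intro}. First I would invoke Theorem~\ref{thm:inertiablock} to conclude that $\inertiaD(G)=(1,n-1,0)$, so that in the notation $\inertiaD(G)=(n_+,n_-,n_0)$ we have $n_-=n-1$ and $n_+=1$. This is the only nontrivial input, and it has already been established upstream through the reduction to the seesaw/weighted-path structure and the sign analysis via Jones's criterion (Lemma~\ref{inertialem}).

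Next I would apply the Graham--Pollak lower bound $N(G)\geq\max\{n_+,n_-\}$ stated in the introduction. Assuming $n\geq 2$ (so that $G$ has at least one block), this gives $N(G)\geq\max\{1,n-1\}=n-1$.

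For the matching upper bound I would cite Winkler's theorem \cite{Winkler83}, which guarantees $N(G)\leq n-1$ for every graph on $n$ vertices. Putting the two inequalities together yields $N(G)=n-1$, as claimed.

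I do not anticipate any genuine obstacle in this step: once the distance inertia is pinned down as $(1,n-1,0)$, the corollary reduces to the observation that such a graph meets the Graham--Pollak lower bound while Winkler's universal upper bound forces equality. The only point worth flagging is the degenerate small case $n=1$, for which the statement is trivial, so the argument is phrased for $n\geq 2$ where the block decomposition and the bound $\max\{n_+,n_-\}=n-1$ are meaningful. All the substantive work lives in Theorem~\ref{thm:inertiablock}, and this corollary is its immediate consequence.
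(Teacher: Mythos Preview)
Your proposal is correct and matches the paper's intended argument: the corollary is stated without proof immediately after Theorem~\ref{thm:inertiablock}, and the inference you spell out---combining $\inertiaD(G)=(1,n-1,0)$ with the Graham--Pollak lower bound and Winkler's upper bound from Section~\ref{sec:intro}---is exactly the one-line deduction the authors have in mind.
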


Note that the set of graphs considered in Theorem~\ref{thm:inertiablock} or Corollary~\ref{cor:touch} contains complete graphs, trees, block graphs, and $2$-clique paths, so they generalize several known results.

\newcommand{\noopsort}[1]{}

\end{document}